\newtheorem{theorem}{Theorem}[section]
\newtheorem{lemma}[theorem]{Lemma}
\newtheorem{conjecture}[theorem]{Conjecture}
\newtheorem{corollary}[theorem]{Corollary}
\newtheorem{remark}[theorem]{Remark}
\newcommand{\Rmnum}[1]{\expandafter\@slowromancap\romannumeral #1@}
\newcommand{\col}{\varphi}
\begin{document}

\title{\LARGE Edge Covering Pseudo-outerplanar Graphs with Forests\footnotetext{E-mails: sdu.zhang@yahoo.com.cn (X. Zhang),
gzliu@sdu.edu.cn (G. Liu), jlwu@sdu.edu.cn (J. L. Wu).}\thanks{Research supported by NSFC (10971121, 11101243, 61070230), RFDP (20100131120017) and GIIFSDU (yzc10040).}
}
\author{Xin Zhang,~~Guizhen Liu\thanks {Corresponding author.}
 ~~and Jian-Liang Wu
\\
{\small School of Mathematics, Shandong University, Jinan, 250100,
China}}
\date{}

\maketitle

\begin{abstract}
\baselineskip 0.50cm

A graph is called pseudo-outerplanar if each block has an embedding on the plane in such a way that the vertices lie on a fixed circle and the edges lie inside the disk of this circle with each of them crossing at most one another. In this paper, we prove that each pseudo-outerplanar graph admits edge decompositions into a linear forest and an outerplanar graph, or a star forest and an outerplanar graph, or two forests and a matching, or $\max\{\Delta(G),4\}$ matchings, or $\max\{\lceil\Delta(G)/2\rceil,3\}$ linear forests.
These results generalize some ones on outerplanar graphs and $K_{2,3}$-minor-free graphs, since the class of pseudo-outerplanar graphs is a larger class than the one of $K_{2,3}$-minor-free graphs.

\vspace{2mm}\noindent\textit{Keywords}: pseudo-outerplanar graphs; edge decomposition; edge chromatic number; linear arboricity.
\end{abstract}

\baselineskip 0.50cm

\section{Introduction}

In this paper, all graphs considered are finite, simple and undirected. We use $V(G)$, $E(G)$, $\delta(G)$ and $\Delta(G)$ to denote the vertex set, the edge set, the minimum degree and the maximum degree of a graph $G$, respectively. Let $d_G(v)$ (or $d(v)$ for simplicity) denote the \emph{degree} of a vertex $v\in V(G)$. A $block$ is a maximal 2-connected subgraph of a given graph $G$. A graph $H$ is a $minor$ of a graph $G$ if a copy of $H$ can be obtained from $G$ via repeated edge deletion and/or edge contraction. For a subset $S\subseteq V(G)\cup E(G)$, $G[S]$ denotes the subgraph of $G$ induced by $S$. The \emph{vertex connectivity} of a graph $G$, denoted by $\kappa(G)$, is the minimum number of vertices whose deletion from $G$ disconnects it.
For other undefined concepts we refer the readers to \cite{Bondy}.

An \emph{outerplanar graph} is a graph that can be embedded on the plane in such a way that it has no crossings and that all its vertices lie on the outer face. In this paper, we aim to introduce an extension of this concept. A graph is called \emph{pseudo-outerplanar} if each block has an embedding on the plane in such a way that the vertices lie on a fixed circle and the edges lie inside the disk of this circle with each of them crossing at most one another. In this embedding, the edges bounding the disk(s) are called \emph{boundary edges} and a disk is said to be \emph{closed} or \emph{open} according to whether or not it contains the circle that constitutes its boundary.
For example, Figure \ref{K4K23} exhibits a pseudo-outerplanar embedding of a graph with two blocks: one is $K_4$ and the other
is $K_{2,3}$. The drawing of $K_4$ in this embedding lies inside a closed disk but the one
of $K_{2,3}$ in this embedding lies inside an open disk. In Figure \ref{K4K23}, the edges in bold are the boundary edges.
A pseudo-outerplanar graph is \emph{maximal} if it is not possible to add an edge such that the resulting graph is still pseudo-outerplanar. Thus $K_{2,3}$ is not a maximal pseudo-outerplanar graph, since we can possibly add two edges to $K_{2,3}$ and remain its pseudo-outerplanarity. One can easily check that each pseudo-outerplanar graph has a planar embedding by its definition. So the class of pseudo-outerplanar graphs forms a subclass of planar graphs. Actually, the definition of pseudo-outerplanar graphs are similar to that of 1-planar graphs (i.e. graphs that can be drawn on the plane so that each edge is crossed by at most one other edge), which was introduced by Ringel \cite{Ringel}.
\begin{figure}
\begin{center}
  \includegraphics[width=7cm,height=3cm]{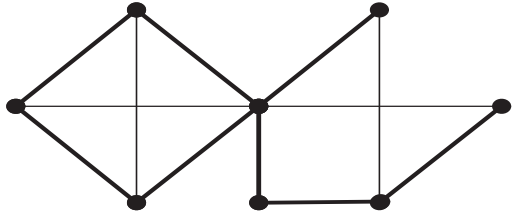}\\
  \caption{An example of pseudo-outerplanar}
  \label{K4K23}
\end{center}
\end{figure}

Many classic problems in graph theory are considered for the class of planar graphs and its subclasses, such as the class of series-parallel graphs and the one of outerplanar graphs. Taking the problem of covering graphs with forests and a graph of bounded maximum degree for example, we say that a graph is $(t,d)$-$coverable$ if its edges can be covered by at most $t$ forests and a graph of maximum degree $d$. In \cite{Balogh}, Balogh et al. conjectured that every simple planar graph is $(2,4)$-coverable and gave a example to show that there are infinitely many planar graphs that are not $(2,3)$-coverable. This conjecture was recently confirmed by Gonçalves in \cite{Goncalves}. In \cite{Balogh}, it is also proved that every series-parallel graph is $(2,0)$-coverable and that every $K_{2,3}$-minor-free graph is both $(1,3)$-coverable and $(2,0)$-coverable. Since a graph is outerplanar if and only if it is $\{K_4,K_{2,3}\}$-minor-free \cite{Harary}, every outerplanar graph is both $(1,3)$-coverable and $(2,0)$-coverable. It is interesting to know what can be said about pseudo-outerplanar graphs, another larger class than outerplanar
graphs.

Edge-coloring is another classic problem in graph theory. In fact, we can regard edge-coloring problems as a covering problem. When we color the edges of a graph $G$, our actual task is to decompose the edge set $E(G)$ into some parts such that the graph induced by each part satisfies a property $\mathcal{P}$. Different properties $\mathcal{P}$ correspond to different types of edge-coloring. For example, a \emph{proper $k$-edge-coloring} of $G$ is a decomposition of $E(G)$ into $k$ subsets such that the graph induced by each subset is a matching in $G$. The minimum integer $k$ such that $G$ has a proper $k$-edge-coloring, denoted by $\chi'(G)$, is the \emph{edge chromatic number} of $G$. Vizing's Theorem states that for any graph $G$, $\Delta(G)\leq \chi'(G)\leq\Delta(G)+1$. A graph $G$ is said to be of \emph{class 1} if $\chi'(G)=\Delta(G)$, and of \emph{class 2} if $\chi'(G)=\Delta(G)+1$. To determine whether a planar graph is of class 1 is an interesting problem. Sanders and Zhao \cite{Sanders} showed that each planar graph with maximum degree at least 7 is of class 1. Juvan, Mohar and Thomas \cite{Juvan} proved that each series-parallel graph with maximum degree at least 3 is of class 1, and thus holds for outerplanar graphs. It is open whether each pseudo-outerplanar graph with large maximum degree is of class 1.

On the other hand, one can consider improper edge-colorings. Concerning this topic, Harary \cite{Harary_la} introduced the concept of linear arboricity. A \emph{linear forest} is a forest in which every connected component is a path. A \emph{$k$-tree-coloring} of $G$ is a decomposition of $E(G)$ into $k$ subsets such that the graph induced by each subset is a linear forest.
The \emph{linear arboricity} $la(G)$ of
a graph $G$ is the minimum integer $k$ such that $G$ has a $k$-tree-coloring. Akiyama, Exoo and Harary \cite{Akiyama} conjectured that $la(G)=\lceil(\Delta(G)+1)/2\rceil$ for any regular graph $G$. It is obvious that $la(G)\geq\lceil\Delta(G)/2\rceil$ for any graph $G$ and $la(G)\geq \lceil(\Delta(G)+1)/2\rceil$ for any regular graph $G$. Hence the conjecture is equivalent to the following one.

\begin{conjecture}[Linear Arboricity Conjecture] \label{lac} For any graph $G$, $\lceil\frac{\Delta(G)}{2}\rceil\leq la(G)\leq\lceil\frac{\Delta(G)+1}{2}\rceil.$
\end{conjecture}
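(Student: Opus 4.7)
The lower bound is immediate from the definition of a linear forest: any linear forest has maximum degree at most $2$, so in any $k$-tree-coloring of $G$, a vertex of maximum degree contributes at most $2$ to each color class, forcing $k\ge \lceil \Delta(G)/2\rceil$. The excerpt already notes this inequality, so the only substantive content is the upper bound, and I should be upfront: this is labelled a \emph{conjecture} precisely because no general proof is known. It is the celebrated Linear Arboricity Conjecture of Akiyama, Exoo, and Harary, open since 1980 and verified only for restricted graph classes (including the pseudo-outerplanar case the present paper aims to establish) and asymptotically in $\Delta(G)$.

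Were I to attempt a direct attack, there are two templates on the table. The first is the inductive/reducibility approach that drives every partial result on subclasses of planar graphs: isolate a local configuration (for instance, a vertex of small degree whose neighbors exhibit a controlled pattern) that must occur in $G$, delete or contract it to form $G'$ with $\Delta(G')\le \Delta(G)$, apply induction to obtain a $\lceil(\Delta(G)+1)/2\rceil$-tree-coloring of $G'$, and then restore the deleted edges by a swapping argument along alternating paths in the existing linear forests so as to free colors at the reattachment points. The second is the probabilistic route pioneered by Alon: orient $G$ so that every vertex has out-degree roughly $\Delta(G)/2$, assign colors randomly to the out-edges from a palette of size slightly larger than $\Delta(G)/2$, and use the Lov\'asz Local Lemma to guarantee positive probability that no color class contains a cycle. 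This yields the asymptotic bound $la(G)\le \Delta(G)/2+O\bigl(\Delta(G)^{2/3}(\log\Delta(G))^{1/4}\bigr)$ but does not reach the conjectured tight value.

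The main obstacle is precisely the gap that has resisted resolution for four decades. For an arbitrary graph $G$ there is no guaranteed light configuration: constructions from expanders and near-regular graphs defeat the reducibility template, while the probabilistic method leaves a lower-order slack that cannot be absorbed into $\lceil(\Delta(G)+1)/2\rceil$. Consequently any realistic plan must either restrict to a subclass admitting provable reducible configurations (the strategy the rest of the paper pursues for pseudo-outerplanar graphs, where the block structure and the $\le 1$ crossing condition produce vertices of low degree one can peel off) or introduce a genuinely new tool that removes the probabilistic error term. Accordingly, my proposal for the stated inequality is to prove the trivial lower bound directly as above and to leave the upper bound as the conjecture it is stated to be, noting that its resolution for the pseudo-outerplanar subclass is what the later sections of the paper will deliver.
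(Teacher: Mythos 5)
Your assessment is correct and matches the paper's treatment: the statement is the open Linear Arboricity Conjecture of Akiyama, Exoo, and Harary, for which the paper offers no proof, only the observation (which you reproduce) that $la(G)\geq\lceil\Delta(G)/2\rceil$ follows trivially since each linear forest covers at most two edges at any vertex. Your discussion of the reducibility and probabilistic strategies, and of why the paper restricts to pseudo-outerplanar graphs, is accurate and consistent with what the later sections actually do.
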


\noindent Now Conjecture \ref{lac} has been proved true for all planar graphs (see \cite{jlWu,WW}). However, it is still interesting to determine which kinds of planar graphs satisfy $la(G)=\lceil\Delta(G)/2\rceil$. Wu \cite{jlWu} proved that it holds for planar graphs with maximum degree at least 13. And the bound 13 was later improved to 9 by Cygan et al. \cite{Cygan}. For subclasses of planar graphs, Wu \cite{Wu} proved that $la(G)=\lceil\Delta(G)/2\rceil$ for all series-parallel graphs (hence also for all outerplanar graphs) with maximum degree at least 3. Can the same conclusion extend to the class of pseudo-outerplanar graphs?

In Section 2, we give some relationships among three classes containing the outerplanar graphs; they are the $K_{2,3}$-minor-free graphs, the series-parallel graphs and the pseudo-outerplanar graphs. In Section $3$, we investigate the problem of covering pseudo-outerplanar graphs with forests and a graph of bounded maximum degree. In Section 4, some unavoidable structures of pseudo-outerplanar graphs are obtained. These structures will be applied to determine the edge chromatic number and linear arboricity of pseudo-outerplanar graphs in Section 5.

\section{Basic Properties}

Let $G$ be a pseudo-outerplanar graph. In the following of this paper, we always assume that $G$ has been drawn on the plane such that (1) for each block $B$ of $G$, the vertices of $B$ lie on a fixed circle and the edges of $B$ lie inside the disk of this circle with each of them crossing at most one another; (2) the number of crossings in $G$ is as small as possible. This drawing is called a $pseudo$-$outerplanar$ $diagram$ of $G$. Let $G$ be a pseudo-outerplanar diagram and let $B$ be a block of $G$. Denote by $v_1, v_2, \cdots, v_{|B|}$ the vertices of $B$, which are lying in a clockwise sequence. Let $\mathcal{V}[v_i,v_j]=\{v_i, v_{i+1}, \cdots, v_j\}$ and $\mathcal{V}(v_i,v_j)=\mathcal{V}[v_i,v_j]\backslash \{v_i,v_j\}$, where the subscripts and the additions are taken modular $|B|$.

\begin{lemma} {\rm \cite{Harary}} \label{opg}
Let $G$ be an outerplanar graph. Then

\noindent{\rm(a)} $\delta(G)\leq 2$,

\noindent{\rm(b)} $\kappa(G)\leq 2$.
\end{lemma}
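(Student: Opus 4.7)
\medskip

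\noindent\textbf{Proof proposal.} Since this is stated as a lemma of Harary and simply cited, the natural plan is to recover the two classical bounds directly from the structure of outerplanar embeddings.

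For part (a), I would reduce to the 2-connected case. Pick an \emph{end-block} $B$ of $G$, i.e.\ a block containing at most one cut-vertex of $G$. Any vertex of $B$ other than the (possible) cut-vertex has the same degree in $G$ as in $B$, so it suffices to exhibit a non-cut vertex of degree at most $2$ in $B$. If $|V(B)|\leq 2$ this is immediate, so assume $B$ is a 2-connected outerplanar block with $|V(B)|\geq 3$. Fix an outerplanar embedding; the outer face boundary of a 2-connected outerplanar graph is a Hamilton cycle $C=v_1v_2\cdots v_{|B|}v_1$, and all remaining edges are non-crossing chords drawn inside $C$. The key step is to show that such a graph always has a vertex of degree $2$ on $C$; in fact I would argue that it has at least two such vertices, corresponding to the classical fact that a triangulated polygon has at least two ears. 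A short argument: consider a chord $v_iv_j$ of $C$ with $|\mathcal{V}[v_i,v_j]|$ minimum among all chords (where the set $\mathcal{V}[\cdot,\cdot]$ is as defined in the paragraph before the lemma). Then no chord can be drawn with both endpoints strictly inside $\mathcal{V}(v_i,v_j)$ without contradicting the minimality, so every $v\in\mathcal{V}(v_i,v_j)$ has only its two cycle-neighbours in $B$; if $\mathcal{V}(v_i,v_j)=\emptyset$ then $B$ itself is $C$ and every vertex has degree $2$. Either way, $B$ contains a vertex of degree~$2$ distinct from the cut-vertex of $G$ (there is at least one vertex of degree $2$ on each of the two arcs of $C$ determined by any chord, so we can always avoid a prescribed vertex).

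For part (b), the plan is simply to invoke the standard inequality $\kappa(G)\leq\delta(G)$ together with (a), giving $\kappa(G)\leq\delta(G)\leq 2$. This uses no further property of outerplanarity.

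The main obstacle is the structural claim in (a) that a 2-connected outerplanar graph has a vertex of degree exactly~$2$ on the Hamilton cycle bounding the outer face. Two ingredients underlie this: (i) 2-connected outerplanar graphs have Hamiltonian outer boundaries, which itself follows from a standard case analysis on a minimal separating pair combined with planarity; and (ii) a polygon with non-crossing chords always contains an "ear", proved most cleanly by the minimal-chord argument above. Once these are in hand, both (a) and (b) are short.
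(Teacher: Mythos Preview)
Your proposal is correct, but there is nothing to compare it against: the paper does not prove this lemma at all. It is stated with a citation to Harary and used as a black box (immediately afterwards, in the proof of Theorem~\ref{degree}). So you have supplied a self-contained argument where the authors simply quote the literature.

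A couple of small remarks on your write-up. In part~(a), the sentence ``if $\mathcal{V}(v_i,v_j)=\emptyset$ then $B$ itself is $C$'' is slightly misphrased: if $v_iv_j$ is a chord then $\mathcal{V}(v_i,v_j)$ is never empty; what you mean is the separate case where $B$ has no chords at all, and then $B=C$. Also, your claim that each open arc determined by a chord contains a degree-$2$ vertex deserves one more line: apply the minimal-chord argument \emph{within} $\mathcal{C}[v_i,v_j]$ (this set is nonempty since it contains $v_iv_j$), and the interior of that minimal chord lies in $\mathcal{V}(v_i,v_j)$. With that, the ``avoid the cut-vertex'' step is fully justified. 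Part~(b) via $\kappa(G)\le\delta(G)$ is fine.
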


\begin{theorem} \label{degree}
Let $G$ be a pseudo-outerplanar graph. Then

\noindent{\rm(a)} $\delta(G)\leq 3$,

\noindent{\rm(b)} $\kappa(G)\leq 2$ unless $G\simeq K_4$.
\end{theorem}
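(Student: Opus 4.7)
The plan is to handle (a) and (b) by reducing to a single 2-connected pseudo-outerplanar block $B$ and then arguing from the diagram.

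For (a): we may assume $G$ is connected. Pick an end-block $B$ of $G$ (a block containing at most one cut-vertex $w$ of $G$), or take $B=G$ if $G$ is already 2-connected. Any vertex of $B$ other than $w$ has the same degree in $G$ as in $B$, so it is enough to prove the sharper claim that \emph{every 2-connected pseudo-outerplanar block has at least two vertices of degree $\leq 3$}. Induct on $|V(B)|$; the case $|V(B)|\leq 4$ is immediate. In the inductive step, if $B$'s diagram has no crossings then $B$ is outerplanar, and the classical strengthening of Lemma~\ref{opg}(a)---every 2-connected outerplanar graph on at least three vertices has at least two vertices of degree $2$---suffices. Otherwise choose a crossing pair $e_1=v_av_b$, $e_2=v_cv_d$ with $a,c,b,d$ in cyclic order around the circle. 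Because each of $e_1,e_2$ has already used its single crossing slot, no other edge of $B$ crosses either one, so the four pie-slices carved out by $e_1$ and $e_2$ are ``sealed'': every vertex interior to a slice has all of its $B$-neighbors on the corresponding closed arc $\mathcal{V}[v_x,v_y]$. Each slice thus restricts to a pseudo-outerplanar graph on strictly fewer vertices, and applying the inductive hypothesis to slices containing interior vertices (at least one exists since $|V(B)|\geq 5$) delivers the desired pair of low-degree vertices of $B$.

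For (b): assume $\kappa(G)\geq 3$; we force $G\simeq K_4$. Part (a) combined with the trivial bound $\kappa(G)\leq\delta(G)$ gives $\delta(G)=3$, and when $|V(G)|=4$ this immediately yields $G\simeq K_4$. So suppose $|V(G)|\geq 5$ and aim for a contradiction. Fix a degree-$3$ vertex $v=v_1$ with $N(v)=\{v_a,v_b,v_c\}$, $1<a<b<c\leq n$ in cyclic order. The key observation is that any non-crossing proper chord whose two arcs both contain vertices is automatically a $2$-vertex cut of $G$, since any $G$-path between the two arcs must either pass through an endpoint of the chord or cross it in the diagram. Applied to the middle chord $v_1v_b$---and to $v_1v_a$, $v_1v_c$ whenever their respective arcs are non-empty---this forces all three chords from $v_1$ to be crossed. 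Because the unique crossing partner of $v_1v_b$ cannot also cross $v_1v_a$ or $v_1v_c$ (else it would have two crossings), its two endpoints are pinned to the arcs $\mathcal{V}(v_a,v_b)$ and $\mathcal{V}(v_b,v_c)$, and a short case analysis on the sub-cases $a=2$, $b=a+1$, $c=b+1$, $c=n$ then either exhibits a further $2$-cut of $G$ (such as $\{v_1,v_a\}$ or $\{v_a,v_c\}$) or squeezes $|V(G)|\leq 4$, contradicting $|V(G)|\geq 5$.

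The main obstacle is the final case analysis in (b): the ``each edge crosses at most one other'' constraint interacts non-trivially with the possible cyclic positions of $v_a,v_b,v_c$, so in each sub-case one must trace exactly which chord is crossed by which and verify that the resulting configuration is inconsistent with 3-connectivity.
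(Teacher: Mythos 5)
Your proof of part (b) is not complete, and it misses the short argument that makes the statement easy. The paper's proof needs no degree-$3$ vertex at all: assume $\kappa(G)\geq 3$ and $|G|\geq 5$; if the diagram has no crossing then $G$ is outerplanar and Lemma \ref{opg}(b) finishes; otherwise take \emph{any} crossing pair $v_iv_j$, $v_kv_l$ with $v_i,v_k,v_j,v_l$ clockwise. Since $|G|\geq 5$, some arc, say $\mathcal{V}(v_i,v_k)$, is nonempty, and any edge from $\mathcal{V}(v_i,v_k)$ to $\mathcal{V}(v_k,v_i)$ would have to cross $v_iv_j$ or $v_kv_l$ a second time; hence $\{v_i,v_k\}$ is a $2$-cut. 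Your ``key observation'' about non-crossed chords is correct and in the same spirit, but the analogous observation for a \emph{crossed} pair is what closes the argument instantly. Instead you route the proof through a degree-$3$ vertex (making (b) depend on (a), which the paper's proof does not) and then defer the conclusion to ``a short case analysis'' that you never carry out and yourself identify as the main obstacle. As written, (b) is a plan, not a proof.

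Part (a) also has a genuine gap. The paper does not prove (a) here at all: it is deferred to Corollary \ref{mindegree3}, which falls out of the full structure theorem (Theorem \ref{2con}). Your direct induction is an attractive alternative, and the ``sealed slices'' observation is correct, but the inductive hypothesis ``every $2$-connected pseudo-outerplanar block has at least two vertices of degree $\leq 3$'' is too weak to propagate: when you apply it to a slice $G[\mathcal{V}[v_x,v_y]]$ (closed up by the edge $v_xv_y$), the two low-degree vertices it returns may both be the arc-endpoints $v_x,v_y$, whose degrees in $B$ are unconstrained, and then you have found nothing. You need a strengthening such as ``two \emph{non-adjacent} vertices of degree $\leq 3$'' or ``for every boundary edge $uv$ there is a vertex of degree $\leq 3$ distinct from $u$ and $v$,'' together with an argument that the slice (or its blocks) is again $2$-connected so the hypothesis applies. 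Until the induction is restated in a form that survives the slice endpoints, (a) is not established.
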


\begin{proof}
The proof of (a) is left to Corollary \ref{mindegree3}. So we only prove (b) here. If $|G|\leq 4$, then this theorem is trivial. So we assume that $G$ is a pseudo-outerplanar diagram with $|G|\geq 5$ and $\kappa(G)\geq 3$. If $G$ has no crossings, then $G$ is an outerplanar graph and thus by Lemma \ref{opg}, $\kappa(G)\leq 2$, a contradiction. So we assume that there are two chords $v_iv_j$ and $v_kv_l$ in $G$ that cross each other, and that $v_i,v_k,v_j,v_l$ are lying in a clockwise sequence. Since $|G|\geq 5$, at least one of $\mathcal{V}(v_i,v_k)$, $\mathcal{V}(v_k,v_j)$, $\mathcal{V}(v_j,v_l)$ and $\mathcal{V}(v_l,v_i)$ is nonempty. Without loss of generality, assume that $\mathcal{V}(v_i,v_k)\neq \emptyset$. Since $v_iv_j$ crosses $v_kv_l$, there is no edges between the two vertex sets $\mathcal{V}(v_i,v_k)$ and $\mathcal{V}(v_k,v_i)$. So $\{v_i,v_k\}$ separates $\mathcal{V}(v_i,v_k)$ and $\mathcal{V}(v_k,v_i)$, contradicting to $\kappa(G)\geq 3$.
\end{proof}

It is well-known that every $2$-connected outerplanar graph is hamiltonian. But this result does not hold for $2$-connected pseudo-outerplanar graphs. The complete bipartite graph $K_{2,3}$ is such a counterexample. A $2$-connected pseudo-outerplanar diagram is called a $hamiltonian$ $diagram$ if it is in such a way that all its vertices lie on a \emph{closed circuit} $C$ (i.e. the disk of $C$ is closed). This closed circuit $C$ is called the $hamiltonian$ $boundary$ of the diagram. By this definition, one can easily see that a non-hamiltonian $2$-connected pseudo-outerplanar graph cannot have a hamiltonian diagram. It seems interesting to answer whether each hamiltonian pseudo-outerplanar graph has a hamiltonian diagram.
\begin{figure}
\begin{center}
  \includegraphics[width=16.8cm,height=4cm]{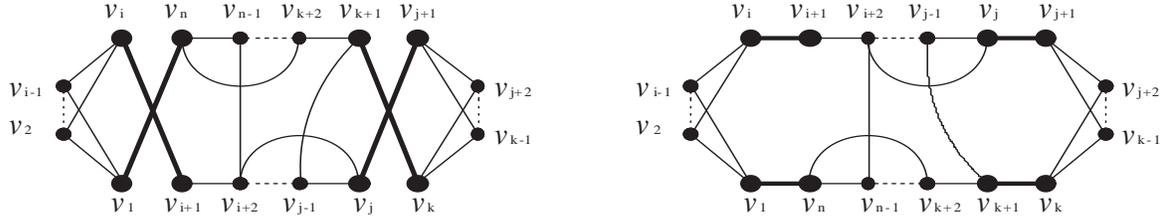}\\
  \caption{Each hamiltonian pseudo-outerplanar graphs has a hamiltonian diagram}
  \label{hamilton}
\end{center}
\end{figure}

\begin{theorem}
Let $G$ be a pseudo-outerplanar diagram and $C$ be a hamiltonian cycle of $G$. If $C$ is not the boundary of $G$, then $G$ has a hamiltonian diagram such that $C$ is the hamiltonian boundary of this diagram.
\end{theorem}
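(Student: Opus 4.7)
The plan is to redraw $G$ by rearranging the vertices on the bounding circle so that their cyclic order matches the one induced by traversing $C = w_1 w_2 \cdots w_n w_1$. Placing the $w_i$'s in this order on a new circle makes every edge of $C$ a boundary edge automatically; the remaining edges of $E(G) \setminus E(C)$ become chords of the new diagram, so the entire task reduces to showing that each such chord still crosses at most one other chord under the new cyclic order, which is precisely the defining property of a pseudo-outerplanar diagram.

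I would proceed by induction on $m(G,C)$, the number of edges of $C$ that fail to be boundary edges in the given pseudo-outerplanar diagram. If $m(G,C)=0$ then $C$ is already the boundary and the conclusion holds immediately after closing the disk. Otherwise, I would pick a chord $e=v_iv_j$ with $e \in E(C)$ and exploit the following structural consequence: $C\setminus\{e\}$ is a Hamilton path from $v_i$ to $v_j$ whose vertex set meets both $\mathcal{V}(v_i,v_j)$ and $\mathcal{V}(v_j,v_i)$, so it must use at least one edge connecting these two arcs, and any such edge is a chord that alternates with $e$ on the original circle. Because the diagram is pseudo-outerplanar, $e$ is crossed by at most one edge in total, and so at most one edge of $C\setminus\{e\}$ can alternate with $e$. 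This tight constraint forces the Hamilton path along $C\setminus\{e\}$ to make exactly one transition between the two arcs, producing a highly structured configuration that supports a local surgery: reverse the order of one of the arcs through the chord $e$, making $e$ into a boundary edge of the redrawing and decreasing $m(G,C)$ by at least one.

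The main obstacle will be to verify carefully that this local surgery preserves pseudo-outerplanarity, i.e.\ that no edge ends up crossing two others after the swap. This amounts to a case analysis keyed on where each edge's endpoints sit relative to the swapped arc and on whether that edge was originally a boundary edge, an uncrossed chord, or a crossed chord. The heart of the argument is that the unique edge $e' \in E(C\setminus\{e\})$ guaranteed to cross $e$ in the original diagram absorbs $e$'s one permitted crossing, so after $e$ has been flipped onto the new boundary the remaining chords retain at most one crossing each. The full transformation is illustrated by Figure~\ref{hamilton}, and iterating it drives $m(G,C)$ down to $0$, at which point $C$ is the hamiltonian boundary as required.
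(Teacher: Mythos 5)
Your structural observations are sound and coincide with the paper's starting point: every edge $e=v_iv_j$ of $C$ that is a chord of the diagram is crossed by exactly one other edge $e'$ of $C$, so the Hamilton path $C-e$ makes a single transition between the two arcs $\mathcal{V}(v_i,v_j)$ and $\mathcal{V}(v_j,v_i)$. The genuine gap is precisely the step you defer. First, the surgery is never actually defined: reversing the arc strictly between $v_i$ and $v_j$ in place does not make $e$ a boundary edge, so you must mean some relocation/reflection of a whole arc, and which arc matters. Second, the justification you offer (``$e'$ absorbs $e$'s one permitted crossing'') does not address the real danger: reordering vertices on the circle changes which pairs of chords interleave whenever a chord has exactly one endpoint in the moved arc, and such a chord may never have interacted with $e$ or $e'$ yet acquire a second crossing after the move. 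Concretely, if $e'=ab$ with $a\in\mathcal{V}(v_i,v_j)$, $b\in\mathcal{V}(v_j,v_i)$ and you reverse the arc $\{v_j\}\cup\mathcal{V}(v_j,v_i)$ to make $v_j$ land next to $v_i$, then an edge $v_iw$ with $w$ in the portion of $\mathcal{V}(v_j,v_i)$ beyond $b$ crosses neither $e$ nor $e'$ before the move but crosses $e'$ after it. Ruling out such edges requires an additional structural fact you do not establish: the two paths of $C-e-e'$ each lie inside one of the four regions into which the crossing pair cuts the disk, so two of the four circle arcs determined by the endpoints of $e$ and $e'$ must be empty, and the arc to move must be chosen accordingly. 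You also assert, without checking, that no boundary edge of $C$ becomes a chord at the two seams created by the reversal, which is what your induction on $m(G,C)$ needs.

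By contrast, the paper sidesteps the crossing-preservation analysis entirely: it takes the two arcs $U$ and $W$ cut off by the crossing pair, forms $G_1=G[U]+v_jv_{k+1}$ and $G_2=G[W]+v_{j+1}v_k$ (each a smaller hamiltonian pseudo-outerplanar diagram whose hamiltonian cycle is the corresponding piece of $C$ closed by the new edge), applies induction on the order, and reglues along $e$ and $e'$. That recursion also re-sorts the vertices inside each arc, which a single reversal cannot do but does not need to do under your induction measure. Your plan is salvageable---once you prove the two empty arcs and observe that all chords other than $e,e'$ then lie entirely inside one closed arc, so that reversing one arc preserves all their mutual interleavings---but as written the central verification is missing, and the one-line reason given for it is not the right reason.
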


\begin{proof}
We proceed by induction on the order of $G$. Since $G$ has a hamiltonian cycle $C=v_1v_2\cdots v_nv_1$ that is not the boundary of the pseudo-outerplanar diagram of $G$, one can easily deduce that there exists at least one crossing in the drawing of $C$ (a sub-diagram of $G$ indeed).
Suppose that $v_jv_{j+1}$ and $v_kv_{k+1}$ (j<k) cross each other and that $v_j$ follows $v_k$ in a clockwise walk around $G$. Denote respectively by $U$ and $W$ the set of vertices from $v_j$ to $v_{k+1}$ and from $v_{j+1}$ to $v_{k}$ in the cyclic clockwise sequence of vertices on the outer boundary of $G$. Take the first graph in Figure \ref{hamilton} for example, we have $C=v_1v_2\cdots v_nv_1$, $U=\{v_j,v_{j-1},\cdots,v_{i+1},v_1,\cdots,v_i,v_n,v_{n-1},\cdots,v_{k+1}\}$ and $W=\{v_{j+1},v_{j+2},\cdots,v_{k-1}, v_{k}\}$. Note that besides $v_jv_{j+1}$ and $v_kv_{k+1}$, there is no other edge $uw$ such that $u\in U$ and $w\in W$ by the definition of $G$. Let $G_1=G[U]+v_jv_{k+1}$ and $G_2=G[W]+v_{j+1}v_k$. Then $G_1$ is a pseudo-outerplanar diagram with a hamiltonian cycle $C_1=v_{k+1}v_{k+2}\cdots v_nv_1\cdots v_jv_{k+1}$ while $G_2$ is a pseudo-outerplanar diagram with a hamiltonian cycle $C_2=v_{j+1}v_{j+2}\cdots v_{k}v_{j+1}$. By induction hypothesis, $G_1,G_2$ respectively has a hamiltonian diagram such that $C_1,C_2$ is the hamiltonian boundary of each diagram. Now we combine these two hamiltonian diagrams and add two edges $v_jv_{j+1}$ and $v_kv_{k+1}$ (see the second graph in Fig.\ref{hamilton}), then we can get a hamiltonian diagram of $G$ with hamiltonian boundary $v_{k+1}v_{k+2}\cdots v_nv_1\cdots v_jv_{j+1}v_{j+2}\cdots v_{k-1}v_{k}v_{k+1}$, which is the cycle $C$ indeed.
\end{proof}

\begin{corollary}\label{hamcor}
Each hamiltonian pseudo-outerplanar graph has a hamiltonian diagram.
\end{corollary}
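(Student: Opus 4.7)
The plan is to obtain the corollary as a direct consequence of the theorem just proved, via a trivial dichotomy on whether the hamiltonian cycle in question is already the boundary of the given diagram. Let $G$ be a hamiltonian pseudo-outerplanar graph; I would first fix an arbitrary pseudo-outerplanar diagram $D$ of $G$ and an arbitrary hamiltonian cycle $C$ of $G$. Since $G$ is hamiltonian (hence $2$-connected and of order at least three), $G$ consists of a single block, so all of its vertices lie on the fixed circle that comes with $D$.

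I would then split into two cases according to whether $C$ coincides with the boundary of $D$. In the first case, $C$ is the boundary, so every vertex of $G$ lies on the closed circuit $C$, and by the very definition of a hamiltonian diagram this makes $D$ itself a hamiltonian diagram of $G$. In the second case, $C$ is not the boundary of $D$, and the preceding theorem applies verbatim to produce a hamiltonian diagram of $G$ whose hamiltonian boundary is $C$. In either case $G$ admits a hamiltonian diagram, which is exactly the assertion of the corollary.

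The only bookkeeping point I would want to double-check is that, in the first case, the disk bounded by $C$ in $D$ really is closed in the sense of the definition; but this is automatic since $C$ is the outer boundary cycle of $D$ and already passes through every vertex of the unique block of $G$. Consequently no induction or additional structural analysis is required here, and the entire difficulty of the corollary is absorbed into the inductive surgery carried out in the theorem; the corollary really is a corollary in the logical sense.
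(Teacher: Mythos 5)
Your argument is correct and is exactly the intended derivation: the paper states the corollary without further proof precisely because the trivial dichotomy you describe (either $C$ is already the closed boundary of the diagram, or the preceding theorem applies) settles it. Your extra check that the single-block structure and the closedness of the disk are automatic is sound and matches the paper's definitions.
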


We say a graph $G$ $quasi$-$hamiltonian$ if each block of $G$ is hamiltonian. Denote the class of pseudo-outerplanar graphs, quasi-hamiltonian pseudo-outerplanar graphs, series-parallel graphs, $K_{2,3}$-minor-free graphs and outerplanar graphs by $\mathcal{P}$, $\mathcal{P}_H$, $\mathcal{S}$, $\mathcal{M}_{2,3}$ and $\mathcal{O}$, respectively. The following basic relationship is obvious.

\begin{remark} \label{basic}
$\mathcal{P}\supset\mathcal{P}_H\supset\mathcal{O}$,
$\mathcal{M}_{2,3}\bigcap \mathcal{S}=\mathcal{O}$
\end{remark}

In the following, we continue to study some more interesting
relationships among these five classes of graphs.

\begin{theorem}
$\mathcal{P}_H\bigcap \mathcal{S}=\mathcal{O}$.
\end{theorem}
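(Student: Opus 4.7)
The plan is to prove the two containments $\mathcal{O}\subseteq\mathcal{P}_H\cap\mathcal{S}$ and $\mathcal{P}_H\cap\mathcal{S}\subseteq\mathcal{O}$ separately. The first is essentially free: from Remark \ref{basic}, outerplanar graphs are quasi-hamiltonian pseudo-outerplanar and the identity $\mathcal{M}_{2,3}\cap\mathcal{S}=\mathcal{O}$ already forces $\mathcal{O}\subseteq\mathcal{S}$, so $\mathcal{O}\subseteq\mathcal{P}_H\cap\mathcal{S}$.

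For the nontrivial inclusion, I would argue block by block. Let $G\in\mathcal{P}_H\cap\mathcal{S}$ and let $B$ be any block of $G$. Since $G$ is quasi-hamiltonian, $B$ is hamiltonian, and hence by Corollary \ref{hamcor} $B$ admits a hamiltonian diagram whose boundary is a hamiltonian cycle $C=v_1v_2\cdots v_nv_1$. The goal is to show that this diagram has no crossings, for then $B$ is outerplanar; since outerplanarity is preserved under identifying two outerplanar graphs at a single vertex, $G$ itself will be outerplanar, finishing the proof.

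The key step is to show that any crossing in the hamiltonian diagram of $B$ yields a $K_4$ minor, contradicting $G\in\mathcal{S}$. Concretely, suppose two chords $v_iv_j$ and $v_kv_l$ cross; then along $C$ the four endpoints appear in cyclic order $v_i,v_k,v_j,v_l$, so the four arcs of $C$ joining consecutive pairs are internally vertex-disjoint. Contracting each of these four arcs to a single edge and retaining the two crossing chords produces six edges on the four vertices $\{v_i,v_k,v_j,v_l\}$, i.e.\ a $K_4$ minor of $B$ (hence of $G$). This contradicts series-parallelness, which is equivalent to being $K_4$-minor-free.

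I do not anticipate a real obstacle here: the four boundary arcs are automatically internally disjoint because they are disjoint portions of a hamiltonian cycle, so the minor extraction is clean, and the reduction to blocks is standard since both series-parallelness and outerplanarity pass to blocks and glue back at cut vertices. The entire argument really rests on the two ingredients already in hand, namely Corollary \ref{hamcor} (hamiltonian pseudo-outerplanar graphs admit a hamiltonian diagram) and the $K_4$-minor characterization of $\mathcal{S}$.
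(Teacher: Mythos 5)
Your proof is correct and follows essentially the same route as the paper: for each block, take the hamiltonian diagram given by Corollary \ref{hamcor}, observe that a crossing pair of chords together with the four arcs of the hamiltonian cycle between their endpoints yields a $K_4$-minor, contradicting series-parallelness, so each block (hence $G$) is outerplanar. The only difference is that you spell out the easy containment $\mathcal{O}\subseteq\mathcal{P}_H\cap\mathcal{S}$, which the paper treats as immediate from Remark \ref{basic}.
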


\begin{proof}
Let $G\in \mathcal{P}_H\bigcap \mathcal{S}$ and let $B$ be a block of $G$. By Corollary \ref{hamcor} $B$ has a hamiltonian diagram, and actually this diagram is outerplanar. If there was a crossing, there would be four vertices $u,v,x,y$ with $uv$ and $xy$ crossing in $B$. Since the diagram is hamiltonian, there are four pairwise disjoint paths $P_{ux},P_{xv},P_{vy}$ and $P_{yu}$ that connects $u$ to $x$, $x$ to $v$, $v$ to $y$ and $y$ to $u$. Thus the edges $uv$ and $vy$ and the four paths $P_{ux},P_{xv},P_{vy}, P_{yu}$ form a $K_4$-minor, which is impossible in a series-parallel graph. Hence $B$ is an outerplanar graph.
\end{proof}

\begin{lemma}{\rm \cite{Hetherington}}\label{near}
Let $H$ be a graph obtained from $K_{2,3}$ by adding an edge joining two vertices of degree $2$ and let $G$ be a $H$-minor-free graph. Then each block of $G$ is
either $K_4$-minor-free or isomorphic to $K_4$.
\end{lemma}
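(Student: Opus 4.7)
The plan is to prove the contrapositive: if some block $B$ of $G$ has a $K_4$-minor and $B\not\simeq K_4$, then $B$, and hence $G$, contains $H$ as a minor. First I would make $H$ concrete: it is $K_4$ with exactly one edge subdivided by a single new vertex, so $H$ has vertex set $\{a,b,c,d,w\}$ and edge set $\{ac,ad,bc,bd,cd,aw,bw\}$, the unique degree-$2$ vertex being $w$.

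The next step is to upgrade ``$K_4$-minor'' to ``$K_4$-subdivision''. Since $K_4$ has maximum degree three, a classical fact tells us that a graph contains $K_4$ as a minor if and only if it contains $K_4$ as a topological minor. So inside $B$ I may fix a subdivision $K$ of $K_4$ with branch vertices $a,b,c,d$ joined by six internally disjoint paths, one for each unordered pair.

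Now I would split the argument according to whether $K$ is a proper subdivision or equals $K_4$ itself. In the first sub-case, some path of $K$, say the one between $a$ and $b$, has length at least $2$; choosing an internal vertex $w$ of that path as a fifth branch vertex immediately exhibits an $H$-subdivision inside $K$, and we are done. In the second sub-case every path of $K$ has length one, so $K=K_4$ sits as a subgraph on $\{a,b,c,d\}$. If $V(B)=\{a,b,c,d\}$ then $B\simeq K_4$, contradicting our assumption; otherwise pick $v\in V(B)\setminus\{a,b,c,d\}$ and, using that $B$ is $2$-connected, apply the fan version of Menger's theorem to obtain two internally disjoint paths $P_1,P_2$ from $v$ to $\{a,b,c,d\}$ ending at two distinct vertices, say $a$ and $b$. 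The five branch sets $\{a\},\{b\},\{c\},\{d\},(V(P_1)\cup V(P_2))\setminus\{a,b\}$ then display $H$ as a minor of $B$: the last branch set is connected through $v$, disjoint from the others (the paths meet $\{a,b,c,d\}$ only at $a$ and $b$), and adjacent to both $\{a\}$ and $\{b\}$, while the $K_4$ on $\{a,b,c,d\}$ supplies the remaining edges.

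The main obstacle is the second sub-case: one genuinely needs the $2$-connectivity of $B$ to drag the extra vertex $v$ into $K_4$ along two vertex-disjoint paths landing at different corners of the $K_4$, and Menger's fan lemma is the right tool. The first sub-case and the reduction from minor to topological minor are then routine, the latter relying crucially on the low maximum degree of $K_4$.
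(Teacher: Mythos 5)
Your proposal is correct, but note that the paper itself contains no proof of this lemma at all: it is imported verbatim from the cited reference \cite{Hetherington} (Hetherington and Woodall), so there is no in-paper argument to compare against, and what you have written is a genuine self-contained replacement for that citation. Your three key moves are all sound: the identification of $H$ as $K_4$ with exactly one edge subdivided is right (in $K_{2,3}$ with parts $\{x,y\}$ and $\{u,v,w\}$, adding $uv$ leaves $w$ as the unique degree-$2$ vertex joined to the nonadjacent pair $x,y$, i.e.\ $K_4$ with $xy$ subdivided); the passage from a $K_4$-minor to a $K_4$-subdivision is legitimate precisely because $\Delta(K_4)=3$; and in the case where the subdivision is a genuine $K_4$-subgraph on $\{a,b,c,d\}$, simplicity of $B$ forces an extra vertex $v$, and the $2$-fan from $v$ to $\{a,b,c,d\}$ (which exists since a block containing $K_4$ and a fifth vertex is $2$-connected, and the fan's paths meet $\{a,b,c,d\}$ only in their distinct endpoints) yields the connected branch set $(V(P_1)\cup V(P_2))\setminus\{a,b\}$ adjacent to two corners of the $K_4$, which together with the four singletons realizes $H$ as a minor; the superfluous edge $ab$ does no harm since minors only require the prescribed adjacencies. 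The degenerate possibilities (a proper subdivision, handled by taking an internal vertex as the subdivision vertex of $H$; or a four-vertex block, which must equal $K_4$) are also covered, so the argument is complete.
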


\begin{corollary} \label{lem}
For any $2$-connected graph $G\in \mathcal{M}_{2,3}$, either $G\in \mathcal{O}$ or $G\simeq K_4$.
\end{corollary}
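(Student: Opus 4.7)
The plan is to chain together three facts: the hypothesis that $G$ is $K_{2,3}$-minor-free, Hetherington's Lemma \ref{near}, and Harary's characterization of outerplanar graphs as the $\{K_4,K_{2,3}\}$-minor-free graphs (already cited in the introduction).

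First I would observe that since $G$ is $2$-connected, $G$ is its own unique block, so Lemma \ref{near} will apply directly to $G$ once we know $G$ is $H$-minor-free for the graph $H=K_{2,3}+e$ (where $e$ joins two vertices of degree $2$). This is immediate: $H$ contains $K_{2,3}$ as a spanning subgraph, hence any graph containing an $H$-minor also contains a $K_{2,3}$-minor. Since $G\in\mathcal{M}_{2,3}$ has no $K_{2,3}$-minor, it has no $H$-minor either.

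Next I would apply Lemma \ref{near} to $G$: its unique block (namely $G$ itself) is either $K_4$-minor-free or isomorphic to $K_4$. In the latter case we are done, with $G\simeq K_4$. In the former case, $G$ is simultaneously $K_4$-minor-free and (by hypothesis) $K_{2,3}$-minor-free, so it is $\{K_4,K_{2,3}\}$-minor-free, which by the Harary characterization \cite{Harary} means $G\in\mathcal{O}$.

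There is essentially no obstacle here; the content is packaged into Lemma \ref{near}, and the corollary is just the observation that a $2$-connected graph has a single block, together with the standard forbidden-minor characterization of outerplanarity. The only minor point worth being explicit about is the reduction from $K_{2,3}$-minor-freeness to $H$-minor-freeness, which is what makes Lemma \ref{near} applicable.
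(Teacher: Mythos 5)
Your proposal is correct and follows essentially the same route as the paper: deduce $H$-minor-freeness from $K_{2,3}$-minor-freeness, apply Lemma \ref{near} to the unique block of the $2$-connected graph $G$, and conclude via the forbidden-minor characterization of outerplanarity (the paper phrases this last step through Remark \ref{basic}, i.e.\ $\mathcal{M}_{2,3}\cap\mathcal{S}=\mathcal{O}$, which is equivalent to the Harary characterization you invoke). You in fact spell out the two small points the paper leaves implicit, namely why $K_{2,3}$-minor-freeness gives $H$-minor-freeness and why $G$ is its own block.
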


\begin{proof}
Since $G\in \mathcal{M}_{2,3}$, $G$ is $H$-minor-free where $H$ is the graph in Lemma \ref{near}. Thus by Remark \ref{basic} and Lemma \ref{near} either $G\in \mathcal{O}$ or $G\simeq K_4$.
\end{proof}

\begin{theorem}
$\mathcal{M}_{2,3}\subset \mathcal{P}_H$.
\end{theorem}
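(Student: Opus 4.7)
The plan is straightforward: reduce to blocks, invoke Corollary \ref{lem}, and then produce a small separating example to witness strictness.

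First, I would observe that both pseudo-outerplanarity and quasi-hamiltonicity are block-local properties, so $G \in \mathcal{P}_H$ if and only if every block of $G$ is a hamiltonian pseudo-outerplanar graph. Moreover, $K_{2,3}$-minor-freeness is inherited by subgraphs, hence by blocks. So it suffices to show that every block $B$ of $G \in \mathcal{M}_{2,3}$ is a hamiltonian pseudo-outerplanar graph.

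Second, any block is either trivial (a single vertex or a single edge, which is vacuously in $\mathcal{P}_H$) or $2$-connected. For the $2$-connected case, Corollary \ref{lem} yields $B \in \mathcal{O}$ or $B \simeq K_4$. If $B$ is $2$-connected outerplanar, then $B$ is hamiltonian (the classical fact recalled just before Corollary \ref{hamcor}) and its standard outerplanar embedding is a crossing-free pseudo-outerplanar embedding. If $B \simeq K_4$, then $B$ is obviously hamiltonian, and placing its four vertices on a circle with the two diagonals crossing once inside the disk (as illustrated by the $K_4$-block in Figure \ref{K4K23}) is a pseudo-outerplanar embedding. Either way $B \in \mathcal{P}_H$.

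Finally, for properness I would exhibit a graph $H \in \mathcal{P}_H \setminus \mathcal{M}_{2,3}$: take the $5$-cycle $v_1v_2v_3v_4v_5v_1$ together with the two chords $v_1v_3$ and $v_2v_4$, placing the five vertices clockwise on a circle. The two chords cross exactly once, so $H$ is $2$-connected, hamiltonian and pseudo-outerplanar, i.e., $H \in \mathcal{P}_H$. Contracting the edge $v_4v_5$ produces $K_4$ on $\{v_1,v_2,v_3,\{v_4,v_5\}\}$, so $H$ has a $K_4$ minor and is not outerplanar; being $2$-connected and not isomorphic to $K_4$, the contrapositive of Corollary \ref{lem} forces $H \notin \mathcal{M}_{2,3}$. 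The argument has no real obstacle once Corollary \ref{lem} is in hand; the only step that wants mild care is this strictness witness, and I prefer to certify it via the contrapositive of Corollary \ref{lem} rather than exhibit an explicit $K_{2,3}$ minor by hand.
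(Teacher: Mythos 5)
Your proof is correct and follows essentially the same route as the paper: the inclusion is deduced from Corollary \ref{lem} via a block decomposition, and strictness is witnessed by a five-vertex example. In fact your graph $H$ (the $5$-cycle with crossing chords $v_1v_3$ and $v_2v_4$) is isomorphic to the paper's witness $(K_1\cup K_2)\vee\overline{K_2}$, namely $K_{2,3}$ plus an edge joining two degree-$2$ vertices, so the two arguments coincide up to presentation.
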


\begin{proof}
The inclusion of $\mathcal{M}_{2,3}$ in $\mathcal{P}_H$ directly follows from Corollary \ref{lem}. The inequality comes from the graph $(K_1\bigcup K_2)\vee \overline{K_2}$ that belongs to $\mathcal{P}_H$ but not to $\mathcal{M}_{2,3}$.
\end{proof}

\section{Decomposability}

Let $G$ be a pseudo-outerplanar diagram and let $B$ be a block of $G$. Denote by $v_1, v_2, \cdots, v_{|B|}$ the vertices of $B$, which are lying in a clockwise sequence. The edges of the form $v_iv_{j}~(j-i=1~{\rm or}~|B|-1)$ are called $boundaries$ while the edges of the form $v_iv_j~(1<j-i<|B|-1)$ are called $chords$ of $G$. Since $G$ is a pseudo-outerplanar diagram, all the crossings are generated by one chord crossing another chord. Let $\mathcal{C}[v_i,v_j]$ be the set of chords $xy$ with $x,y\in \mathcal{V}[v_i,v_j]$ and let $\mathcal{C}(G)$ be the set of crossed chords in $G$.

\begin{figure}
\begin{center}
  \includegraphics[width=15cm,height=3.5cm]{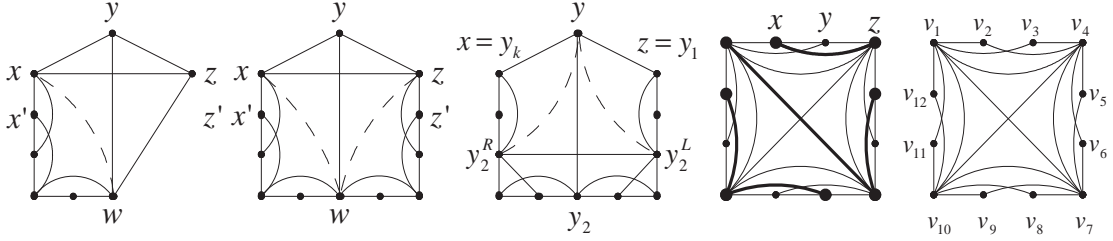}\\
  \caption{Decomposability of pseudo-outerplanar graphs}
  \label{covering}
\end{center}
\end{figure}

\begin{theorem} \label{linear}
Let $G$ be a hamiltonian pseudo-outerplanar diagram and $C$ be the hamiltonian boundary of this diagram. Let $y\in V(C)$ and $yx,yz\in E(C)$. Then there exists a linear forest $T$ in $G$ such that $E(T)\subseteq\mathcal{C}(G)$, $d_T(y)=0$, $\max\{d_T(x),d_T(z)\}\leq 1$, and $G-E(T)$ is an outerplanar diagram.
\end{theorem}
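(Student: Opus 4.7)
Plan: We proceed by induction on $|\mathcal{C}(G)|$. For the base case $|\mathcal{C}(G)|=0$, $G$ is already outerplanar and $T=\emptyset$ satisfies every condition.

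For the inductive step, pick a crossing pair $\{e_1,e_2\}$ with $e_1=v_iv_j$, $e_2=v_kv_l$, and $i<k<j<l$ on $C$. Because each chord in $\mathcal{C}(G)$ has a unique crossing partner, removing either chord $e^{**}\in\{e_1,e_2\}$ leaves the other uncrossed, so $G':=G-e^{**}$ is a hamiltonian pseudo-outerplanar diagram with boundary $C$ and $\mathcal{C}(G')=\mathcal{C}(G)\setminus\{e_1,e_2\}$. Apply the induction hypothesis to $G'$ with the same $(y,x,z)$ to obtain a linear forest $T'\subseteq\mathcal{C}(G')$ satisfying the four conclusions with respect to $G'$. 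Let $e^*:=\{e_1,e_2\}\setminus\{e^{**}\}$ and $T:=T'\cup\{e^*\}$; then $T\subseteq\mathcal{C}(G)$, $T$ hits the crossing $\{e_1,e_2\}$ via $e^*$, and $G-E(T)=G'-E(T')$ is outerplanar. The work lies in choosing $\{e_1,e_2\}$ and $e^*$ so that $T$ is a linear forest with the correct degrees at $y,x,z$.

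The requirement $d_T(y)=[y\in e^*]=0$ forces $y\notin e^*$; since two crossing chords share no endpoint, at most one chord of the pair contains $y$, so such $e^*$ always exists. The remaining constraints---degree at most $2$ at the endpoints of $e^*$, degree at most $1$ at $x$ and $z$, and acyclicity---demand that we control $d_{T'}$ at these special vertices. The plan is to select the crossing pair to process to be an \emph{innermost} one, minimising $|\mathcal{V}(v_k,v_j)|$ among all crossings of $G$. Innermost-ness prevents any other crossing from being nested strictly inside $\mathcal{V}[v_k,v_j]$ and, because each chord in $\mathcal{C}(G)$ has a unique crossing partner, bounds the number of $T'$-edges incident to $v_i^*$ and $v_j^*$ by $1$ each and forces $v_i^*,v_j^*$ into distinct components of $T'$.

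The main obstacle is the subcase where an endpoint of $e^*$ lies in $\{x,z\}$ and already has $T'$-degree $1$. We plan to resolve this by invoking the pseudo-outerplanar structural fact that at most one crossed chord incident to $x$ (respectively $z$) can have a crossing partner containing $y$---a short direct argument using only the ``each chord crosses at most one other'' condition: if two such pairs existed, interleaving would force a third crossing on one of the four chords. This limits the number of ``forced'' picks at $x$ and $z$ and, combined with the freedom provided by the innermost reduction, lets us redirect picks between pairs to restore the degree bounds. The remaining verification is a routine case analysis over the positions of $\{v_i,v_j,v_k,v_l\}$ relative to $\{y,x,z\}$.
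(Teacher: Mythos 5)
Your reduction (induction on $|\mathcal{C}(G)|$, adding one chord $e^*$ of each crossing pair to $T$ and recursing on $G'=G-e^{**}$) founders on the inductive step: the hypothesis applied to $G'$ controls $d_{T'}$ only at the three distinguished vertices $x,y,z$, yet before you may add $e^*$ you need $d_{T'}\leq 1$ at \emph{both endpoints of $e^*$} and that these endpoints lie in different components of $T'$. Your claim that choosing an \emph{innermost} crossing pair bounds these degrees by $1$ is unjustified and false in general: an endpoint $v$ of $e^*$ can be incident to several other crossed chords, each belonging to a crossing pair located far away from the innermost one (for instance $v_1v_{10}$ crossed by $v_9v_{12}$ and $v_1v_{15}$ crossed by $v_{14}v_{17}$, coexisting with the pair being processed at $v_1$), and nothing prevents $T'$ from already using two of them, giving $d_{T'}(v)=2$. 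Since both chords of the pair can simultaneously have such a ``blocked'' endpoint, neither choice of $e^*$ need work, and your fallback --- ``redirect picks between pairs'' --- amounts to re-proving the theorem globally rather than performing a local induction. Acyclicity of $T'+e^*$ is likewise asserted, not proved. The auxiliary observation about chords at $x$ whose partners contain $y$ does not repair this, because the problematic endpoints of $e^*$ are typically not in $\{x,y,z\}$ at all. (A smaller slip: $G-E(T)\neq G'-E(T')$ as sets --- the former still contains $e^{**}$ --- though it is indeed crossing-free once $e^*$ is gone.)

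The paper avoids exactly this difficulty by inducting on the order of $G$ and splitting the diagram at \emph{vertices} rather than at crossings: it cuts $G$ into two or three hamiltonian sub-diagrams that meet only in single vertices ($y_2$, or the endpoints $y_2^L,y_2^R$ of the chord crossing $yy_2$), and the conditions $d_T(y)=0$, $\max\{d_T(x),d_T(z)\}\leq 1$ built into the statement are precisely what guarantees that each sub-forest has degree at most $1$ at every shared vertex, so that the union of the pieces is again a linear forest. To salvage your approach you would have to strengthen the inductive invariant so that it controls $d_T$ at every vertex that might later become an endpoint of a chord to be added, which in effect forces you back to a vertex-splitting argument of the paper's type.
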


\begin{proof}
We proceed by induction on the order of $G$. One can see that the theorem holds for $|G|\leq 4$ since the case $G=K_4$ is trivial. So we assume that $|G|\geq 5$. In the following, we also assume that the three vertices $x,y,z$ occur on $C$ in a clockwise sequence.

First, we consider the case when $d_G(y)=2$. Let $G'=(G-y)+xz$ and $C'=(C-y)+xz$ (note that if the edge $xz$ already exists in $G$, we let $G'=G-y$ and $C'=C-y$). Then $G'$ is a hamiltonian pseudo-outerplanar diagram with $C'$ being its hamiltonian boundary. Let $xx'\in E(C')$ with $x'\not=z$ ($x'$ exists because $|V(G)|\geq 5$). By induction on $(G',C',x',x,z)$ (as $(G,C,x,y,z)$, respectively), there exists a linear forest $T'$ in $G'$ such that $E(T')\subseteq\mathcal{C}(G')$, $d_{T'}(x)=0$, $\max\{d_{T'}(x'),d_{T'}(z)\}\leq 1$, and $G'-E(T')$ is an outerplanar diagram.
Note that $\mathcal{C}(G')=\mathcal{C}(G)$. Let $T=T'$. Then $E(T)\subseteq \mathcal{C}(G)$, $d_T(x)=d_T(y)=0$ and $d_T(z)\leq 1$. Furthermore, one can easily see that $G-E(T)$ is an outerplanar diagram.

If $d_G(y)=3$ and $xz\in E(G)$, then the edge $xz$ is crossed by another edge $yw$. Assume first that $\mathcal{V}(z,w)=\emptyset$, then $zw\in E(C)$. Let $G'=G[\mathcal{V}[w,x]]+wx$ and let $C'$ be the cycle consisting of the edge $xw$ and the clockwise subpath around $C$ from $w$ to $x$. We assume that $N_{C'}(x)\setminus \{w\}\not=\emptyset$, because otherwise $G$ would have less than five vertices, a contradiction.
Let $xx'\in E(C')$ with $x'\not=w$ (see 1st graph of Figure \ref{covering}). Note that $G'$ is a hamiltonian pseudo-outerplanar diagram with $C'$ being its hamiltonian boundary. By induction on $(G',C',x',x,w)$, there exists a linear forest $T'$ in $G'$ such that $E(T')\subseteq\mathcal{C}(G')$, $d_{T'}(x)=0$, $\max\{d_{T'}(x'),d_{T'}(w)\}\leq 1$, and $G'-E(T')$ is an outerplanar diagram. Let $T=T'+xz$. Then $E(T)\subseteq\mathcal{C}(G)$, $d_T(y)=0$, $d_T(x)=d_T(z)=1$, and $G-E(T)$ is an outerplanar diagram. Thus a linear forest $T$ as required has been constructed. So in the following, we assume that $\mathcal{V}(z,w)\neq\emptyset$ and $\mathcal{V}(w,x)\neq\emptyset$.
Let $zz'\in E(C_1)$ with $z'\not=y,w$, and let $xx'\in E(C)$ with $x'\not=y,w$ (see 2nd graph of Figure \ref{covering}).
Set $G_1=G[\mathcal{V}[z,w]]+zw$ and $G_2=G[\mathcal{V}[w,x]]+wx$. By $C_1$ and $C_2$, we respectively denote the cycle that consists of the edge $wz$ and the clockwise subpath around $C$ from $z$ to $w$, and that consists of the edge $xw$ and the clockwise subpath around $C$ from $w$ to $x$. Then for $i=1,2$, $G_i$ is a hamiltonian pseudo-outerplanar diagram with $C_i$ being its hamiltonian boundary.
By inductions on $(G_1,C_1,w,z,z')$ and $(G_2,C_2,w,x,x')$, there respectively exists a linear forest $T_1$ in $G_1$ with $E(T_1)\in \mathcal{C}(G_1)$, $d_{T_1}(z)=0$, $\max\{d_{T_1}(w),d_{T_1}(z')\}\leq 1$ and $G_1-E(T_1)$ being an outerplanar diagram, and a linear forest $T_2$ in $G_2$ with $E(T_2)\in \mathcal{C}(G_2)$, $d_{T_2}(x)=0$, $\max\{d_{T_2}(w),d_{T_2}(x')\}\leq 1$ and $G_2-E(T_2)$ being an outerplanar diagram.
Let $T=T_1 \cup T_2 \cup \{xz\}$. Then we can easily see that $E(T)\subseteq\mathcal{C}(G)$, $d_T(y)=0$, $d_T(x)=d_T(z)=1$, $d_T(w)\leq 2$ and $G-E(T)$ is an outerplanar diagram.
Since $T_1$ and $T_2$ intersect on at most one vertex, $w$, of degree at most one in each forest and there is no edges between $V(T_1)\setminus \{w\}$ and $V(T_2)\setminus \{w\}$, $T_1\cup T_2$ is a linear forest. Furthermore since $x, y$ and $z$ have degree $0$ in $T_1\cup T_2$, $T_1 \cup T_2 \cup \{xz\}$ is as required.

The last case is when $d_G(y)\geq 3$ and $xz\not\in E(G)$. We label the neighbors of $y$ by $y_1,y_2,\cdots,y_k$ in a clockwise sequence on $C$, where $y_1=z$, $y_k=x$ and $k\geq 3$. If $yy_2$ is not a crossed chord in $G$, then set $G_1=G[\mathcal{V}[y,y_2]]$ and $G_2=G[\mathcal{V}[y_2,y]]$. Denote by $C_1$ (resp. $C_2$) the cycle consisting of the edge $yy_2$ and the clockwise subpath around $C$ from $y$ to $y_2$ (resp. from $y_2$ to $y$). Then $G_i$($i=1,2$) is a hamiltonian pseudo-outerplanar diagram with $C_i$ being its hamiltonian boundary. By using inductions on $(G_1,C_1,y_2,y,z)$ and $(G_2,C_2,y_2,y,x)$,
it is easy to construct a linear forest as required. So we assume that $yy_2$ is crossed by another edge $y_2^Ly_2^R$ in $G$, where $y_2^L, y_2, y_2^R$ are labeled clockwise. Since there is no edges between $\mathcal{V}(y,y_2^L)$ and $\mathcal{V}(y_2^L,y)$, or between $\mathcal{V}(y,y_2^R)$ and $\mathcal{V}(y_2^R,y)$, we can add two edges $yy_2^L$ and $yy_2^R$ to $G$ if they do not really exist so that they do not generate new crossings in $G$ and thus the resulting graph is still pseudo-outerplanar (see the 3rd graph of Fig. \ref{covering}).
By $C_1$, $C_2$ and $C_3$, we respectively denote the cycle that consists of the edge $y_2^Ly$ and the clockwise subpath around $C$ from $y$ to $y_2^L$, and that consists of the path $y_2^R y y_2^L$ and the clockwise subpath around $C$ from $y_2^L$ to $y_2^R$, and that consists of the edge $yy_2^R$ and the clockwise subpath around $C$ from $y_2^R$ to $y$. Let $G_i$ be the subgraph of $G$ contained in the closed disc of $C_i$($i=1,2,3$). Here one should be careful that if $y_2^L=y_1$ (resp. $y_2^R=y_k$), then $C_1$ (resp. $C_3$) is not a cycle indeed and then $G_1$ (resp. $G_3$) is defined to be a null graph. However, $G_1$ and $G_3$ cannot simultaneously be null graphs, since $y_1y_k \not\in E(G)$. Hence any of $G_i$($i=1,2,3$) is a subgraph of $G$ with smaller order. Moreover, every non-null graph $G_i$ is a hamiltonian pseudo-outerplanar diagram with $C_i$ being its hamiltonian boundary.
Without loss of generality, we assume that none of $G_i$($i=1,2,3$) is null graph.
By inductions on $(G_1,C_1,y_1,y,y_2^L)$, $(G_2,C_2,y_2^R,y,y_2^L)$ and $(G_3,C_3,y_k,y,y_2^R)$, there exists a linear forest $T_i$ in $G_i$ such that $E(T_i)\in \mathcal{C}(G_i)$, $d_{T_i}(y)=0$ and $G_i-E(T_i)$ is an outerplanar diagram ($i=1,2,3$). Meanwhile, we have $\max\{d_{T_1}(y_1),d_{T_1}(y_2^L),d_{T_2}(y_2^L),d_{T_2}(y_2^R), d_{T_3}(y_2^R),d_{T_3}(y_k)\}\leq 1$. Let $T=T_1\cup T_2\cup T_3$. Note that there is no edges whose end points are belong to different parts of the vertex partition $[\mathcal{V}(y,y_2^L),\mathcal{V}(y_2^L,y_2^R),\mathcal{V}(y_2^R,y)]$ (because otherwise either $yy_2$ or $y_2^Ly_2^R$ may be crossed twice). So $T$ is still a forest. Since $d_T(y_2^R)\leq d_{T_2}(y_2^R)+d_{T_3}(y_2^R)\leq 2$ and $d_T(y_2^L)\leq d_{T_1}(y_2^L)+d_{T_2}(y_2^L)\leq 2$, $\Delta(T)\leq 2$. Thus, a linear forest $T$ has been constructed. Since $\mathcal{C}(G_i)\subseteq \mathcal{C}(G)$ ($i=1,2,3$), $E(T)=E(T_1)\cup E(T_2)\cup E(T_3)\in \mathcal{C}(G_1)\cup\mathcal{C}(G_1)\cup\mathcal{C}(G_3)\in \mathcal{C}(G)$. Meanwhile, $d_T(y)=d_{T_1}(y)+d_{T_2}(y)+d_{T_3}(y)=0$, $d_T(x)=d_T(y_k)=d_{T_3}(y_k)\leq 1$ and $d_T(z)=d_{T}(y_1)=d_{T_1}(y_1)\leq 1$. At last since $G-E(T)\subseteq \bigcup_{i=1}^3 (G_i-E(T_i))$, $G-E(T)$ is an outerplanar diagram. Hence we construct a linear forest $T$ as required in $G$ and completes the proof of the theorem.
\end{proof}

A \emph{star forest} is a graph in which every component is a star. The \emph{root} of a star is the vertex of maximum degree. Note that $K_2$ has two roots. The $roots$ of a star forest is the union of the root of each star component. The following Theorem \ref{star} is an analog of Theorem \ref{linear} (note that the condition $\max\{d_T(x),d_T(z)\}\leq 1$ in Theorem \ref{linear} is equivalent to that $x$ or $z$ are vertices of $T$ if and only if $x$ or $z$ are leaves of $T$), whose proof is almost the same with that of Theorem \ref{linear}. Actually, we can still proceed by induction on the order of $G$ and split the proofs into three cases: the first is $d_G(y)=2$, the second is $d_G(y)=3$ and $xz\in E(G)$, and the last is $d_G(y)\geq 3$ and $xz\not\in E(G)$. In each case we can construct a star forest $T$ as required by the same way as in the proof of Theorem \ref{linear}. The detailed proof of Theorem \ref{star} is left to the readers.

\begin{theorem} \label{star}
Let $G$ be a hamiltonian pseudo-outerplanar diagram and $C$ be the hamiltonian boundary of this diagram. Let $y\in V(C)$ and $yx,yz\in E(C)$. Then there exists a star forest $T$ in $G$ such that $E(T)\in \mathcal{C}(G)$, $d_T(y)=0$, $x$ or $z$ are vertices of $T$ if and only if $x$ or $z$ are roots of $T$, and
$G-E(T)$ is an outerplanar diagram.
\end{theorem}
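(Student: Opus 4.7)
The plan is to mirror the inductive argument for Theorem \ref{linear} almost verbatim, proceeding by induction on $|G|$ and splitting on $d_G(y)$ (and whether $xz \in E(G)$) into the same three cases. The only conceptual change is that the endpoint constraint $\max\{d_T(x), d_T(z)\} \leq 1$ is replaced by the slightly stronger requirement that $x$ or $z$ may appear in $V(T)$ only as roots of star components. Consequently, whenever we attach a new edge at $x$ or $z$ in an inductive step, we must know in advance that $x$ or $z$ does not already appear as a non-root of the inherited forest; this is exactly what the inductive statement guarantees.

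For the base case $|G| \leq 4$, the only nontrivial instance is $G \simeq K_4$; setting $T = \{xz\}$ gives a single $K_2$ in which both endpoints are roots, $y$ is untouched, and $G - E(T)$ is a triangle with a pendant. For the inductive step, in Case 1 ($d_G(y) = 2$) one applies induction to $G' = (G - y) + xz$ on the tuple $(G', C', x', x, z)$ with $x' \in N_{C'}(x) \setminus \{z\}$ and sets $T = T'$; the output automatically satisfies $d_T(y) = 0$ and $x \notin V(T)$, while the hypothesis on $z$ carries over unchanged. In Case 2 ($d_G(y) = 3$ and $xz \in E(G)$) the chord $xz$ is crossed by some $yw$, and depending on whether $\mathcal{V}(z, w) = \emptyset$ one forms one or two hamiltonian sub-diagrams exactly as in Theorem \ref{linear}. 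In each subproblem induction gives $d_{T_i}(x) = 0$ or $d_{T_i}(z) = 0$ at the endpoint that lies inside the sub-diagram, while the other endpoint lies outside it; hence appending $xz$ produces a disjoint $K_2$ in $T$ whose two vertices are roots.

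Case 3 ($d_G(y) \geq 3$ and $xz \notin E(G)$) is handled by labelling $y$'s neighbours $y_1 = z, y_2, \ldots, y_k = x$ and distinguishing whether $yy_2$ is uncrossed (two subregions) or crossed by some $y_2^L y_2^R$ (three subregions). The sub-diagrams $G_i$ are defined identically to the proof of Theorem \ref{linear}, and induction is applied with $y$ as the middle vertex in every subproblem, so $d_{T_i}(y) = 0$ for every $i$; the vertices $x$ and $z$ each appear in at most one $T_i$ and only as roots, so their status in the union is preserved.

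The only point that genuinely needs verification is that the union of the returned sub-forests remains a star forest. The sub-diagrams overlap only at a short list of designated vertices: $w$ in Case 2, and $y_2^L, y_2^R$ in Case 3b. Each such vertex is, by the inductive hypothesis, either absent from or a root of every sub-forest containing it, and two stars sharing a common root merge into one larger star with that vertex as centre. Pseudo-outerplanarity ensures that no edge of $G$ crosses the boundary between distinct subregions, so no new incidences are introduced and $T$ is indeed a disjoint union of stars. The remaining verifications that $E(T) \subseteq \mathcal{C}(G)$, $d_T(y) = 0$, and $G - E(T)$ is an outerplanar diagram follow line by line from the corresponding steps for Theorem \ref{linear}; the main obstacle is purely bookkeeping the root conditions across the several subcases.
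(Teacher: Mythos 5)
Your proposal is correct and follows exactly the route the paper intends: the paper itself only sketches the proof of Theorem \ref{star} by saying it repeats the induction of Theorem \ref{linear} with the same three cases on $d_G(y)$ and $xz\in E(G)$. Your elaboration of the one genuinely new point --- that the sub-forests overlap only at designated vertices ($w$, or $y_2^L$ and $y_2^R$) which the inductive hypothesis forces to be roots, so that the stars meeting there merge into a single star centred at that vertex --- is the right verification and is consistent with the paper's argument.
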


\begin{corollary} \label{L}
Each pseudo-outerplanar graph can be decomposed into an outerplanar graph and a linear forest, or an outerplanar graph and a star forest.
\end{corollary}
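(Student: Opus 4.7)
The plan is to reduce Corollary \ref{L} to Theorems \ref{linear} and \ref{star} by decomposing $G$ into its blocks and treating each block individually. Single-edge blocks and triangles are already outerplanar and contribute only to the outerplanar part, so it suffices to find, for each $2$-connected block $B$, a linear forest (resp.\ star forest) $T_B \subseteq E(B)$ with $B-E(T_B)$ outerplanar.

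The key obstacle at the block level is that Theorems \ref{linear} and \ref{star} require a hamiltonian diagram, while $K_{2,3}$ shows that a $2$-connected pseudo-outerplanar graph need not be hamiltonian. I would bypass this by first extending $B$ to a hamiltonian pseudo-outerplanar graph $B'$: for every pair of consecutive vertices on the circle of the pseudo-outerplanar embedding of $B$ that are not yet adjacent, add a new boundary edge along the arc between them. These additions introduce no crossings, so $B'$ remains pseudo-outerplanar, and the boundary of its disk is now a hamiltonian cycle; Corollary \ref{hamcor} then supplies a hamiltonian diagram of $B'$. Invoking Theorem \ref{linear} (resp.\ \ref{star}) on $B'$ produces a forest $T_{B'}$; setting $T_B := T_{B'}\cap E(B)$ gives a subforest of the desired type in $B$, and $B-E(T_B)$ is a subgraph of the outerplanar diagram $B'-E(T_{B'})$, hence outerplanar.

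The main obstacle is the global one: at a cut vertex $v$ of $G$ the degrees of the various local forests $T_B$ accumulate, so $T=\bigcup_B T_B$ could fail to be a linear (or star) forest. I would control this by rooting the block tree of $G$ at an arbitrary block, and, for every non-root block $B$, choosing the distinguished vertex of Theorem \ref{linear}/\ref{star} to be the cut vertex $y_B$ joining $B$ to its parent (for the root block, any vertex may serve). The theorems then force $d_{T_B}(y_B)=0$. Consequently any cut vertex $v$ of $G$ receives edges of $T$ only through its unique parent block, so $d_T(v)\le 2$ in the linear case and $v$ is absent from $T$ or is a leaf/root of its star component in the star case. Because any two blocks of $G$ share at most one vertex, no cycle of $T$ can cross blocks, so $T$ is a forest, hence a linear (resp.\ star) forest. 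Finally, $G-E(T)$ is outerplanar since each of its blocks is a subgraph of some $B-E(T_B)$ and outerplanarity is preserved under gluing at cut vertices.
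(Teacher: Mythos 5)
Your argument is correct and is essentially the paper's own proof: the paper likewise reduces to quasi-hamiltonian blocks by closing each block's circumferential boundary, applies Theorems \ref{linear} and \ref{star} per block with the cut vertex chosen as the distinguished vertex $y$ (so $d_{T}(y)=0$ there), and glues the pieces — its induction on end-blocks is just the iterative form of your rooted block-tree argument. Your write-up is somewhat more explicit about why the union at cut vertices stays a linear (resp.\ star) forest, but no new idea is involved.
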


\begin{proof}
Without loss of generality, let $G$ be a quasi-hamiltonian pseudo-outerplanar diagram. Otherwise we can add some edges to close the circumferential boundary of each block. In what follows, we proceed by induction on the number of blocks, $\omega(G)$, in $G$. The base case when $\omega(G)=1$ follows directly from Theorems \ref{linear} and \ref{star} so we assume that $\omega(G)\geq 2$. Choose a block $B$ of $G$ that contains only one cut vertex $y$ (i.e. $B$ is an end-block).
By Theorems \ref{linear} and \ref{star}, $B$ can be decomposed into an outerplanar graph $H_1$ and a linear forest $T_1$ with $d_{T_1}(y)=0$, or an outerplanar graph $H_2$ and a star forest $T_2$ with $d_{T_2}(y)=0$. Meanwhile, by the induction hypothesis, $G-B$ can also be decomposed into an outerplanar graph $H_3$ and a linear forest $T_3$, or an outerplanar graph $H_4$ and a star forest $T_4$. Therefore, $G$ can be covered by the linear forest $T=T_1\cup T_3$ and the outerplanar graph $H=H_1\cup H_3$, or the star forest $T=T_2\cup T_4$ and the outerplanar graph $H=H_2\cup H_4$.
\end{proof}

\begin{theorem} \label{mat}
For every integer $n\geq 12$, there exists a $2$-connected pseudo-outerplanar graph with order $n$ that cannot be decomposed into an outerplanar
graph and a matching.
\end{theorem}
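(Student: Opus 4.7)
The plan is to construct, for each $n\geq 12$, an explicit $2$-connected pseudo-outerplanar graph $G_n$ and to show via the characterization (outerplanar $\Longleftrightarrow$ $\{K_4,K_{2,3}\}$-minor-free) that $G_n-M$ is non-outerplanar for every matching $M\subseteq E(G_n)$.

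For the base case $n=12$, place $v_1,\ldots,v_{12}$ in clockwise order on a circle, include every cycle edge, and for each $i\in\{1,4,7,10\}$ add the three chords $v_iv_{i+2},v_{i+1}v_{i+3},v_iv_{i+3}$ so that $\{v_i,v_{i+1},v_{i+2},v_{i+3}\}$ spans a $K_4$; finally add the long chords $v_1v_7$ and $v_4v_{10}$ so that the four hubs $v_1,v_4,v_7,v_{10}$ themselves span an additional ``hub'' $K_4$. A direct check of the convex embedding shows that each small block contributes only the crossing $v_iv_{i+2}\times v_{i+1}v_{i+3}$, the side chord $v_iv_{i+3}$ is uncrossed, and the two long chords cross only each other, so every chord has at most one crossing. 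Two-connectedness is immediate from the outer cycle. For $n>12$, build $G_n$ inductively from $G_{n-1}$ by subdividing one outer cycle edge; the new degree-$2$ vertex preserves both properties, and a standard matching reduction (contraction of the unused subdivision vertex, or deletion of the pendant it becomes after an $M$-edge removal) reduces non-decomposability of $G_n$ to that of $G_{n-1}$.

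The heart of the proof is to rule out every matching $M$ for $G_{12}$. Since a matching of $K_4$ has size at most $2$, $M\cap E(\mathrm{hub}\ K_4)$ has size $0$, $1$, or $2$. If it is empty, the hub $K_4$ is an intact $K_4$ subgraph of $G_{12}-M$; otherwise $M$ contains a hub edge $e$, and the matching constraint at the endpoints of $e$ forbids every other $M$-edge at those hubs. For instance, when $\{v_1v_7,v_4v_{10}\}\subseteq M$, the branch sets $X=\{v_1\},Y=\{v_4\},Z_1=\{v_2\},Z_2=\{v_3\},Z_3=\{v_7,v_{10}\}$ form a $K_{2,3}$ minor, and each of the seven required edges $v_1v_2,v_1v_3,v_1v_{10},v_2v_4,v_3v_4,v_4v_7,v_7v_{10}$ is forbidden from $M$ by the matching constraints at $v_1,v_4,v_7,v_{10}$. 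The remaining subcases follow from the cyclic symmetry $v_i\mapsto v_{i+3}$ of $G_{12}$ together with analogous branch-set extensions through an adjacent small $K_4$.

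The main obstacle is carrying out this case analysis exhaustively: for every possible hub-edge pattern in $M$ one must exhibit a $K_4$ or $K_{2,3}$ minor whose defining edges are all ruled out of $M$ by the matching constraints at the hubs. The long chords $v_1v_7$ and $v_4v_{10}$ are the essential ingredient, because without them the matching $\{v_1v_3,v_4v_6,v_7v_9,v_{10}v_{12}\}$ (which removes exactly the one crossing in each small block) would make $G_{12}-M$ outerplanar. Pseudo-outerplanarity caps how many extra chords one can introduce without forcing a chord to be crossed twice, so the hub $K_4$ is built from exactly six edges and the case analysis must be tight: no matching, however cleverly chosen, simultaneously destroys every reconstructed minor.
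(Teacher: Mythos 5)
Your construction and overall strategy are essentially the paper's: the paper's example is your $G_{12}$ minus the four cycle edges $v_2v_3,v_5v_6,v_8v_9,v_{11}v_{12}$, and its proof likewise exhibits a $K_{2,3}$- or $K_4$-minor in $G-M$; you merely organize the cases around $M\cap E(\mathrm{hub}\ K_4)$ instead of around the five-edge sets $S_i$. That organization is sound, and the subcases you leave to ``symmetry and analogous extensions'' do close. Up to the rotation $v_i\mapsto v_{i+3}$ the hub pattern of $M$ is one of: empty (intact $K_4$); a single short edge $v_1v_4$ or two opposite short edges $v_1v_4,v_7v_{10}$, where the plain $K_{2,3}$ on $\{v_1,v_4\}\times\{v_2,v_3,v_7\}$ survives because all six of its edges meet a saturated hub; the two long edges, which is your worked example; or the single long edge $v_1v_7$, where either $v_2v_4,v_3v_4\notin M$ and $\{v_1,v_4\}\times\{v_2,v_3,v_{10}\}$ survives, or $v_4$ is saturated and $\{v_4,v_7\}\times\{v_5,v_6,v_{10}\}$ survives. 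Write these few lines out rather than gesture at them; they are short and not a gap.

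The genuine gap is the passage from $n=12$ to $n>12$. If $G_n$ is $G_{n-1}$ with $uv$ subdivided by $w$ and $uw\in M'$, then deleting the pendant $w$ from $G_n-M'$ leaves $G_{n-1}-uv-(M'\setminus\{uw\})$, and $\{uv\}\cup(M'\setminus\{uw\})$ need not be a matching of $G_{n-1}$, since $M'$ may contain a further edge at $v$. So the induction hypothesis does not apply to the reduced graph and ``deletion of the pendant'' does not reduce to the previous case. (The paper is silent on this point: its proof only treats the twelve-vertex graph.) The fix is cheap with your setup: every minor exhibited above uses only edges incident to the hubs $v_1,v_4,v_7,v_{10}$, so none of them uses $v_2v_3$. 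Hence the stronger statement holds that $G_{12}-v_2v_3-M$ is non-outerplanar for every matching $M$ of $G_{12}$. Now define $G_n$ by replacing $v_2v_3$ with a path having $n-12$ internal vertices (still $2$-connected and pseudo-outerplanar), and for any matching $M'$ of $G_n$ delete the internal path vertices to obtain $G_{12}-v_2v_3-M''$ with $M''=M'\cap E(G_{12})$ a matching avoiding $v_2v_3$; the strengthened base case then finishes the proof for all $n\ge 12$.
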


\begin{proof}
We show the last graph $G$ in Figure \ref{covering} is a graph that cannot be decomposed into an outerplanar graph and a matching. Otherwise we suppose that $E(G)=E(H)\cup E(M)$, where $H$ is an outerplanar and $M$ is matching. Set $S_i=\{v_iv_{i+1},v_iv_{i+2},v_iv_{i+3},v_{i+1}v_{i+3},v_{i+2}v_{i+3}\}$ (mod $12$) ($i=1,4,7,10$). We then claim that there exists an edge set $S_i$ that is contained in $E(H)$.
Suppose not, assume first that $v_1v_2\in E(M)$. Then $v_1v_k\in E(H)$ ($k=3,4,7,10,11,12$) and exactly one of $v_{10}v_{11}$ and $v_{10}v_{12}$ should be contained in $E(M)$, say $v_{10}v_{11}$. Then $v_kv_{10}\in E(H)$ ($k=4,7,12$). However, the five vertices $\{v_1,v_4,v_7,v_{10},v_{12}\}$ and the three disjoint paths $\{v_1v_4v_{10},v_1v_7v_{10},v_1v_{12}v_{10}\}$ form a copy of $K_{2,3}$ in $H$; this is a contradiction. Now assume that $v_1v_4\in E(M)$. Then $v_1v_2,v_1v_3,v_1v_7,v_2v_4,v_3v_4,v_4v_7\in E(H)$ and thus the graph induced by $\{v_1,v_2,v_3,v_{4},v_{7}\}$ is a $K_{2,3}$, which is impossible in an outerplanar graph. Hence in the following we assume that $S_1\subseteq E(H)$. If $\{v_1v_7,v_4v_7\} \subseteq E(H)$, then the five vertices $\{v_1,v_2,v_3,v_{4},v_{7}\}$ and the three disjoint paths $\{v_1v_2v_4,v_1v_3v_4,v_1v_{7}v_4\}$ form a copy of $K_{2,3}$ in $H$, a contradiction. So exactly one of $v_1v_7$ and $v_4v_7$ should be contained in $E(M)$, say $v_1v_7$. Similarly, $\{v_1v_{10},v_4v_{10}\} \not\subseteq E(H)$. Thus $v_1v_{10}\in E(H)$, $v_4v_{10}\in E(M)$ and $v_7v_{10}\in E(H)$. Now the six vertices $\{v_1,v_2,v_3,v_4,v_7,v_{10}\}$ and the three disjoint paths $\{v_1v_3v_4,v_1v_2v_4,v_1v_{10}v_7v_4\}$ form a $K_{2,3}$-minor in $H$. This contradiction completes the proof of this theorem.
\end{proof}

\begin{theorem} \label{maximal}
Every maximal pseudo-outerplanar graph $G$ is obtained from a maximal pseudo-outerplanar diagram $H$ by gluing a $K_3$ or a $K_4$ along a boundary edge of $H$.
\end{theorem}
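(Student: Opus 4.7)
The plan is to identify in any maximal pseudo-outerplanar graph $G$ (with $|V(G)|\geq 5$) a vertex of small degree whose neighborhood forms a $K_3$ or $K_4$ that can be peeled off along a boundary edge, leaving behind the desired diagram $H$. Small base cases ($|V(G)|\leq 4$, essentially $K_3$ and $K_4$) can be handled directly by taking $H$ to be a single edge. By Theorem~\ref{degree}(a), there is a vertex $v\in V(G)$ with $d_G(v)\leq 3$; pendants and isolated vertices are excluded by maximality, so I may assume $d_G(v)\in\{2,3\}$.

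If $d_G(v)=2$ with neighbors $u,w$, I would first argue that $uw\in E(G)$: in the pseudo-outerplanar diagram, the edge $uw$ can be routed just inside the path $uvw$ without creating any new crossing, so by the maximality of $G$ it must already be present. Hence $\{v,u,w\}$ spans a $K_3$ glued onto $H:=G-v$ along the edge $uw$, which becomes a boundary edge of the resulting diagram once $v$ and its two incident boundary edges are removed.

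If $d_G(v)=3$ with cyclic neighbors $u_1,u_2,u_3$ along the relevant block's circle, a similar maximality-plus-local-diagram analysis forces the induced subgraph on $\{v,u_1,u_2,u_3\}$ to be a $K_4$: each missing edge $u_iu_j$ could be added inside the region bounded by $v$'s edges without violating the ``at most one crossing per edge'' rule, contradicting maximality. Inside this $K_4$, one of the $u_i$'s (say $v'$, whose only non-$v$ neighbors lie within the $K_4$) plays the role of a second degree-$3$ vertex, and $H:=G-\{v,v'\}$ is the required smaller diagram with the remaining edge of the $K_4$ acting as a boundary edge. The existence of such a second degree-3 vertex follows from the fact that, since $v$ is separated from $V(G)\setminus\{v,v'\}$ only through $v'$ in the diagram, the $K_4$ must hang off the rest of the diagram through a single shared edge, for otherwise pseudo-outerplanarity (in particular, Theorem~\ref{degree}(b)) would be violated.

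The main obstacle is the $d_G(v)=3$ case: one must carefully exploit the constraint that each chord is crossed at most once in its block to rule out configurations in which $v$'s neighborhood fails to close up into a $K_4$, and to show that the corresponding ``twin'' vertex $v'$ of degree $3$ always exists. A secondary point is verifying that the resulting $H$ is itself maximal; this, however, is straightforward, since any edge we could add to $H$ lies among vertices of $V(H)\subset V(G)$ and the peeled $K_3$ or $K_4$ lies in a region of the diagram disjoint from such potential additions, so that edge would also be addable to $G$, contradicting the maximality of $G$.
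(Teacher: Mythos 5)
Your overall strategy --- peel off a $K_3$ at a suitable degree-$2$ vertex or a $K_4$ at a suitable degree-$3$ vertex --- matches the shape of the paper's result, and your degree-$2$ case is essentially sound: by maximality every pair of consecutive vertices on a block's circle is joined (such an edge crosses nothing), so a degree-$2$ vertex $v$ has exactly its two circle-neighbours $u,w$ as neighbours, the chord $uw$ separates only $v$ and hence crosses nothing, and maximality forces $uw\in E(G)$. (You should state the consecutivity explicitly; ``routed just inside the path $uvw$'' is not valid if $uv$ or $vw$ were a crossed chord.) The degree-$3$ case, however, is genuinely broken, and not merely in need of polish. First, your argument that $N[v]$ induces a $K_4$ (``each missing edge $u_iu_j$ could be added inside the region bounded by $v$'s edges'') ignores that such an edge must cross $v$'s third edge $vu_k$, which is illegal when $vu_k$ is already a crossed chord --- and in the relevant configurations it always is. Second, and fatally, the ``twin'' degree-$3$ vertex $v'$ need not exist for an arbitrarily chosen degree-$3$ vertex. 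Take vertices $z,v_1,v_2,v_3,v_4,v_5$ clockwise on a circle, with the boundary $6$-cycle plus the chords $v_1v_3$, $v_2v_5$, $v_3v_5$, $v_1v_5$ (only $v_1v_3$ and $v_2v_5$ cross). One checks that no further edge can be added, so this is a maximal pseudo-outerplanar graph: a $K_4$ on $\{v_1,v_2,v_3,v_5\}$ with triangles glued on $v_3v_5$ and $v_1v_5$. Here $v_2$ has degree $3$ and $N[v_2]$ does induce a $K_4$, but its three neighbours have degrees $4$, $4$ and $5$, so no twin exists; moreover the four vertices of that $K_4$ are not consecutive on the circle, so it cannot be detached along a single boundary edge. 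The only valid peelings are triangles at $z$ or $v_4$. Thus ``pick any vertex of degree at most $3$ and run the corresponding case'' cannot be completed: you would have to prove that some \emph{particular} low-degree vertex admits a peeling, which is the whole content of the theorem and is exactly what your sketch defers.

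The paper avoids this trap by not starting from a minimum-degree vertex at all. It first observes that a maximal diagram is hamiltonian, then argues on the chord structure: an innermost pair of mutually crossing chords is shown to span four \emph{consecutive} circle vertices carrying a $K_4$ whose two middle vertices have degree $3$ (the crossing diagonals prevent them from having any further neighbours), while an innermost non-crossed chord spans three consecutive vertices carrying a $K_3$ whose middle vertex has degree $2$; maximality of the residual diagram is then checked directly. If you want to rescue your approach you would need to replace ``any vertex of degree at most $3$'' by a similarly canonical choice --- for instance, a degree-$2$ or degree-$3$ vertex arising from an innermost chord --- at which point you are reproducing the paper's argument.
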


\begin{proof}
Without loss of generality, we assume that $G$ is a 2-connected maximal pseudo-outerplanar diagram. Since $G$ is maximal, $G$ is hamiltonian and $G$ has at least one chord. Let $C=\{v_1v_2\cdots v_{|G|}\}$ be the hamiltonian boundary of the diagram of $G$. Now we split the proof into two cases.

\noindent\emph{Case $1$. There exists a crossed chord in $G$.}

Let $v_iv_j$ be a chord in $G$ that crosses another chord $v_kv_l~(1\leq i<k<j<l\leq |G|)$. Actually, we can properly choose $i$ and $j$ such that there is no pair of mutually crossed chords in $\mathcal{C}[v_i,v_l]\setminus \{v_iv_j,v_kv_l\}$, because otherwise we can change the value of $i$ or $j$ to meet this condition.

Assume first that there is no non-crossed chord in $\mathcal{C}[v_i,v_l]\setminus \{v_iv_l\}$. Then we shall have $k=i+1$. Otherwise, since $v_iv_k\not\in E(G)$ by our assumption, we can add $v_iv_k$ to $G$ so that $G$ is still pseudo-outerplanar, contradicting the fact that $G$ is maximal. Similarly, $j=k+1,~l=j+1$ and $v_{i}v_{l}\in E(G)$ by the maximality of $G$. Furthermore, $d(v_k)=d(v_j)=3$. Now remove the vertices $v_k$ and $v_j$ from $G$ and denote the resulting graph by $H$. Then $H$ is a maximal pseudo-outerplanar diagram. Otherwise we can add an edge $e=v_av_b\not\in E(H)~(a,b\neq k~{\rm or}~j)$ to $H$ so that $H+e$ is pseudo-outerplanar. Therefore, $e\not\in E(G)$ and $G+e$ is a pseudo-outerplanar graph, contradicting the fact that $G$ is maximal. At this stage, one can easily see that $G$ is obtained from $H$ by gluing a $K_4$ along the boundary edge $v_iv_l$ of $H$.

Second, assume that there is a non-crossed chord $v_rv_s$ in $\mathcal{C}[v_i,v_l]\setminus \{v_iv_l\}$. Since there is no crossed chords in $\mathcal{C}[v_r,v_s]$ by assumption, we can properly choose $r$ and $s$ such that $\mathcal{C}[v_r,v_s]\setminus \{v_rv_s\}=\emptyset$. By the maximality of $G$, we have $s=r+2$, otherwise we can add an edge $v_rv_{r+2}$ to $G$ so that the resulting graph is still pseudo-outerplanar, a contradiction. Since $v_rv_s$ is a non-crossed chord, $d(v_{r+1})=2$. Now remove the vertex $v_{r+1}$ from $G$ and denote the resulting graph by $H'$. Then by a similar argument as before one can prove that $H'$ is a maximal pseudo-outerplanar diagram. Furthermore, one can easily see that $G$ is obtained from $H'$ by gluing a $K_3$ along the boundary edge $v_rv_{r+2}$ of $H$.

\noindent\emph{Case $2$. There exists a non-crossed chord in $G$.}

Let $v_iv_j~(1\leq i<j\leq |G|)$ be a non-crossed chord in $G$. In this case we shall assume that there is no crossed chord in $\mathcal{C}[v_i,v_j]$, because otherwise we are in Case 1. We can also properly choose $i$ and $j$ such that $\mathcal{C}[v_i,v_j]\setminus \{v_iv_j\}=\emptyset$. Therefore, we are now in the second subcase of Case 1, where we can set $r:=i$ and $s:=j$.
\end{proof}

\begin{corollary} \label{mat}
Each pseudo-outerplanar graph can be decomposed into two forests and a matching.
\end{corollary}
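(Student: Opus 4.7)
The approach is a straightforward induction on $|V(G)|$ driven by the recursive structure supplied by Theorem \ref{maximal}. First I would reduce to the case that $G$ is maximal pseudo-outerplanar: if $G$ is pseudo-outerplanar, then by repeatedly adding edges that preserve pseudo-outerplanarity we obtain a maximal pseudo-outerplanar $G^{\star}$ with $V(G^{\star})=V(G)$ and $E(G)\subseteq E(G^{\star})$, and any decomposition $E(G^{\star})=F_1\cup F_2\cup M$ into two forests and a matching restricts to one of the same type on $E(G)$. The base case $|V(G)|\le 4$ is direct verification; for instance $K_4$ decomposes into the star $\{v_1v_2,v_1v_3,v_1v_4\}$, the path $\{v_2v_3,v_3v_4\}$, and the single matching edge $v_2v_4$, while all smaller cases are trivial.

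For the inductive step, apply Theorem \ref{maximal} to write $G=H\cup K$, where $H$ is a strictly smaller maximal pseudo-outerplanar diagram and $K$ is a copy of either $K_3$ or $K_4$ glued to $H$ along a single boundary edge $ab$. By the induction hypothesis, $E(H)$ admits a decomposition $F_1\cup F_2\cup M$ with $F_1,F_2$ forests and $M$ a matching. In the $K_3$ case, $K$ contributes a new vertex $w$ together with the two edges $aw,wb$; I would place $aw$ into $F_1$ and $wb$ into $F_2$. In the $K_4$ case, $K$ contributes two new vertices $u,v$ and the five edges $au,av,bu,bv,uv$; I would place $\{au,bv\}$ into $F_1$, $\{av,bu\}$ into $F_2$, and the remaining edge $uv$ into $M$.

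The verification that the extended triple is still two forests and a matching is routine: every edge appended to $F_1$ or $F_2$ has a newly introduced endpoint ($w$, $u$ or $v$) which becomes a leaf of the updated forest, so no new cycle can form; and in the $K_4$ case the edge $uv$ has both endpoints outside $V(H)$, hence outside the vertex set of $M$, so $M\cup\{uv\}$ remains a matching. There is no genuine obstacle in this argument, since the hard work has been absorbed by Theorem \ref{maximal}: its rigid two-case structure of ``gluing $K_3$ or $K_4$ along a boundary edge'' dovetails exactly with the room available in two forests plus a matching (two edges of the glued piece per forest, plus one crossing chord for the matching in the $K_4$ case), and the only thing one has to notice is that new vertices automatically provide the non-cycle and non-conflict conditions for free.
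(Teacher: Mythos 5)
Your proposal is correct and follows essentially the same route as the paper: induction via Theorem \ref{maximal}, splitting the two new edges of a glued $K_3$ between the two forests, and splitting the five new edges of a glued $K_4$ as two per forest plus $uv$ into the matching (your particular assignment $\{au,bv\}/\{av,bu\}$ differs cosmetically from the paper's $\{xu,xv\}/\{yu,yv\}$ but works for the same leaf-attachment reason). No substantive difference.
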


\begin{proof}
Let $G$ be a pseudo-outerplanar graph. In the following, we proceed by induction on the size of $G$ and assume that $G$ is a maximal pseudo-outerplanar diagram. By Theorem \ref{maximal}, there respectively exists a $K_3=[xyz]$ or a $K_4=[xyuv]$
contained in $G$ such that $H=G-\{xz,yz\}$ or $H=G-\{xu,xv,yu,yv,uv\}$ is a maximal pseudo-outerplanar graph with $xy$ being its boundary edge.
By induction on $H$, there exists two forests $F_{1}$, $F_{2}$ and a matching $M$ such that $E(H)=E(F_{1})\cup E(F_{2})\cup E(M)$.
In the former case, let $F'_{1}=F_{1}+xz$, $F'_{2}=F_{2}+yz$ and $M'=M$; and in the latter case, let $F'_1=F_1+\{xu,xv\}$, $F'_2=F_2+\{yu,yv\}$ and $M'=M+uv$.
One can easily check that the two forests $F'_{1}$, $F'_{2}$ and the matching $M'$ are the desired decomposition of $G$.
\end{proof}

\begin{theorem}\label{2f}
For every integer $n\geq 6$, there exists a $2$-connected pseudo-outerplanar graph with order $n$ that cannot be decomposed into two forests.
\end{theorem}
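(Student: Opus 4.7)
The strategy is a standard density argument: if a graph $H$ admits an edge decomposition into two forests $F_1,F_2$, then $|E(H)|=|E(F_1)|+|E(F_2)|\leq 2(|V(H)|-1)$, and the same bound is inherited by every subgraph. Consequently, to prove the theorem it suffices, for each $n\geq 6$, to exhibit a $2$-connected pseudo-outerplanar graph $G_n$ on $n$ vertices that contains a subgraph $H$ with $|E(H)|\geq 2|V(H)|-1$.

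For the base case I would let $G_6$ be the graph obtained by gluing two copies of $K_4$ along a common edge, with vertex set $\{v_1,\ldots,v_6\}$, where $\{v_1,v_2,v_3,v_4\}$ and $\{v_3,v_4,v_5,v_6\}$ each span a $K_4$. Then $|E(G_6)|=11>10=2(|V(G_6)|-1)$, so $G_6$ itself already violates the required inequality. It is $2$-connected by inspection, and it is pseudo-outerplanar: place the vertices on a circle in clockwise order $v_3,v_1,v_2,v_4,v_6,v_5$, so that all six boundary positions are edges of $G_6$ and the remaining five edges become chords; a direct check shows the only pairs of crossing chords are $\{v_1v_4,v_2v_3\}$ and $\{v_3v_6,v_4v_5\}$, so each chord is crossed at most once. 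This is also consistent with Theorem \ref{maximal}, which permits gluing a $K_4$ along a boundary edge of a pseudo-outerplanar diagram.

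For $n\geq 7$ I would build $G_n$ from $G_6$ by attaching a new path $v_7v_8\cdots v_n$ together with the edges $v_5v_7$ and $v_nv_3$, placing the new vertices on the circle in order on the arc from $v_5$ to $v_3$ that lies opposite the original chord region. The new boundary cycle is $v_3,v_1,v_2,v_4,v_6,v_5,v_7,\ldots,v_n$, every new edge lies on this boundary (hence is uncrossed), and the former boundary edge $v_3v_5$ becomes a chord; since no edge joins $\{v_7,\ldots,v_n\}$ to $\{v_1,v_2,v_4,v_6\}$, this new chord crosses nothing, and the two original crossings remain valid. Thus $G_n$ is pseudo-outerplanar, and it is $2$-connected because the attached path supplies an internally disjoint $v_3$-$v_5$ path inside $G_n$. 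Since $G_n$ contains $G_6$ as a subgraph, any decomposition of $G_n$ into two forests would restrict to such a decomposition of $G_6$, contradicting the density bound. The only mildly technical point is the finite crossing check in the $G_6$ embedding; beyond that, no substantive obstacle arises.
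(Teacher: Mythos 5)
Your proposal is correct and rests on the same counting argument as the paper's proof, namely that a graph decomposable into two forests has at most $2(|V|-1)$ edges. The only difference is the witness: the paper builds a globally dense pseudo-outerplanar graph with $\lfloor 5n/2\rfloor-4>2n-2$ edges, whereas you concentrate the density in a fixed $6$-vertex subgraph (two copies of $K_4$ glued along an edge, with $11>2\cdot 6-2$ edges, and your crossing check for its embedding is valid) and extend to order $n$ by an ear; both routes are sound.
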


\begin{proof}
Let $C=v_1\cdots v_n v_1$($n\geq 6$) be a cycle with $n$ vertices. We add edges $v_1v_i$ for all $3\leq i\leq n-1$ and edges $v_{2i}v_{2i+2}$ for all $1\leq i\leq \lfloor\frac{n}{2}\rfloor-1$. One can easily check that the resulted graph $G_n$ is a $2$-connected pseudo-outerplanar graph with order $n$ and size $\lfloor\frac{5}{2}n\rfloor-4$. If $G_n$ can be decomposed into two forests $F_1$ and $F_2$, then $|E(G_n)|=|E(F_1)|+|E(F_2)|\leq |V(F_1)|+|V(F_2)|-2\leq 2n-2$. However, for $n\geq 6$, $|E(G_n)|=\lfloor\frac{5}{2}n\rfloor-4>2n-2$. Hence, the graph $G_n$($n\geq 6$) cannot be covered by two forests.
\end{proof}

From Corollary \ref{mat} and Theorem \ref{2f}, we directly have the following two corollaries.

\begin{corollary}
Every pseudo-outerplanar graph is $(2,1)$-coverable; the two parameters given here are best possible.
\end{corollary}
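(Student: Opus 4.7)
The $(2,1)$-coverability half is immediate: Corollary~\ref{mat} already produces a decomposition $E(G)=E(F_1)\cup E(F_2)\cup E(M)$ into two forests and a matching, and a matching is by definition a subgraph of maximum degree at most~$1$. So the plan is to spend essentially one sentence on this direction and concentrate on sharpness.

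For "best possible" I would show that neither parameter can be lowered, using in both cases the graphs $G_n$ constructed in the proof of Theorem~\ref{2f}. Recall that $G_n$ is a $2$-connected pseudo-outerplanar graph on $n$ vertices with $|E(G_n)| = \lfloor \tfrac{5n}{2}\rfloor - 4$. First, $d=1$ cannot be replaced by $d=0$: a $(2,0)$-covering is exactly a decomposition into two forests, which by Theorem~\ref{2f} is impossible for $G_n$ whenever $n\ge 6$. Second, $t=2$ cannot be replaced by $t=1$: a $(1,1)$-covering would express $E(G_n)$ as the union of one forest (at most $n-1$ edges) and one matching (at most $\lfloor n/2\rfloor$ edges), giving
\[
|E(G_n)| \;\le\; (n-1) + \Bigl\lfloor\frac{n}{2}\Bigr\rfloor \;=\; \Bigl\lfloor\frac{3n}{2}\Bigr\rfloor - 1,
\]
which contradicts $\lfloor \tfrac{5n}{2}\rfloor - 4 > \lfloor \tfrac{3n}{2}\rfloor - 1$ valid for every $n\ge 6$.

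The proof is therefore essentially an accounting argument on top of results already established; I do not anticipate a real obstacle. The only mild care needed is to reuse the \emph{same} family $G_n$ for both lower bounds, which keeps the example explicit and avoids having to construct a separate witness against $(1,1)$-coverability. If desired, one could sharpen the second lower bound by remarking that any forest in $G_n$ has strictly fewer than $n-1$ edges whenever $G_n$ is not itself a tree, but the weaker inequality already suffices.
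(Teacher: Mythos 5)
Your proposal is correct and follows essentially the same route as the paper, which derives this corollary directly from Corollary~\ref{mat} (for the $(2,1)$-covering) and from the graphs $G_n$ of Theorem~\ref{2f} (for sharpness). The only difference is that you spell out the edge-counting argument ruling out a $(1,1)$-covering, a detail the paper leaves implicit, and your count $\lfloor 5n/2\rfloor-4>(n-1)+\lfloor n/2\rfloor$ for $n\ge 6$ checks out.
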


\begin{corollary}
The arboricity of a pseudo-outerplanar graph is at most $3$; and this bound is sharp.
\end{corollary}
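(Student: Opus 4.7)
The plan is to deduce both parts directly from the two results this corollary is stated as following from, namely Corollary \ref{mat} and Theorem \ref{2f}.

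For the upper bound, I will invoke Corollary \ref{mat}, which decomposes any pseudo-outerplanar graph $G$ into two forests $F_1,F_2$ and a matching $M$. Since a matching is itself a forest (it has no cycles at all), the triple $(F_1,F_2,M)$ is a decomposition of $E(G)$ into three forests. Hence the arboricity of $G$ is at most $3$.

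For sharpness, I will exhibit a pseudo-outerplanar graph whose arboricity is exactly $3$, and the family $\{G_n\}_{n\ge 6}$ from the proof of Theorem \ref{2f} does precisely this. By that theorem, for every $n\ge 6$ the graph $G_n$ is a $2$-connected pseudo-outerplanar graph that admits no decomposition into two forests, so its arboricity is at least $3$. Combined with the upper bound just established, this yields $\mathrm{arb}(G_n)=3$, showing that the value $3$ cannot be lowered in general.

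The only thing one might worry about is a semantic issue — whether the authors' notion of decomposition into "two forests" rules out a trivial forest; but since the statement and proof of Theorem \ref{2f} rely only on the edge count inequality $|E(F_1)|+|E(F_2)|\le |V(F_1)|+|V(F_2)|-2\le 2n-2$, the obstruction is genuine and not affected by allowing empty or degenerate forests. No other step presents any real difficulty, so the entire proof reduces to these two immediate citations.
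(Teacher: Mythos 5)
Your proof is correct and matches the paper's intended argument exactly: the paper derives this corollary directly from Corollary \ref{mat} (a matching is a forest, giving arboricity at most $3$) and Theorem \ref{2f} (the graphs $G_n$ witness sharpness), leaving the details implicit. Your filling-in of those details, including the observation that the edge-count obstruction is unaffected by degenerate forests, is accurate.
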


\section{Unavoidable Structures}
In this section, a vertex set $\mathcal{V}[v_i,v_j]~(i<j)$ is called a \emph{non-edge} if $j=i+1$ and $v_iv_j\not\in E(G)$, called a \emph{path} if $v_k v_{k+1}\in E(G)$ for all $i\leq k<j$ and called a \emph{subpath} if $j>i+1$ and some edges in the form $v_kv_{k+1}~(i\leq k<j)$ are missing. We say a chord $v_kv_l~(k<l)$ is contained in a chord $v_iv_j~(i<j)$ if $i\leq k$ and $l\leq j$. In any figure of this section, the solid vertices have no edges of
$G$ incident with them other than those shown.

\begin{lemma} {\rm \cite{Wang}}\label{strc_opg}
Let $G$ be a $2$-connected outerplanar graph. Then

$(1)$ $G$ has two adjacent $2$-vertices $u$ and $v$, or

$(2)$ $G$ has a $3$-cycle $uwxu$ such that $d(u)=2$ and $d(w)=3$, or

$(3)$ $G$ has a $4$-vertex $w$, where $N(w)=\{u,v,x,y\}$, such that
$d(u)=d(v)=2$, $N(u)=\{w,x\}$ and $N(v)=\{w,y\}$.
\end{lemma}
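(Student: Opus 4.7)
The plan is to proceed by induction on $|V(G)|$. The base cases $|V(G)| \leq 4$ reduce to $K_3$, $C_4$, and $K_4 - e$; the first two satisfy (1), and the last satisfies (2).

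For the induction step with $|V(G)| \geq 5$, I use that the outer face of a $2$-connected outerplanar graph is a Hamilton cycle $C$, the remaining edges are pairwise non-crossing chords, and every degree-$2$ vertex has both of its edges on $C$, so its neighbourhood equals its pair of cycle-neighbours. Form the weak dual tree $T$ whose nodes are the internal faces and whose edges correspond to chords. If $T$ is a single node then $G = C$ and any two adjacent vertices witness (1). Otherwise pick a leaf $F_0$ of $T$; it is bounded by one chord $xw$ and a path $x, y_1, \ldots, y_k, w$ on $C$. If $k \geq 2$, then $y_1$ and $y_2$ are adjacent degree-$2$ vertices and (1) holds. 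Otherwise $k = 1$, $F_0$ is a triangle $u_0 x w$ with $u_0 := y_1$ and $d(u_0) = 2$; if $d(x) = 3$ or $d(w) = 3$ then the triangle witnesses (2), so the only remaining case is $d(x), d(w) \geq 4$.

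In that residual case I apply the induction hypothesis to $G' := G - u_0$, which is $2$-connected outerplanar (the chord $xw$ persists) with $d_{G'}(x) = d_G(x) - 1 \geq 3$ and $d_{G'}(w) = d_G(w) - 1 \geq 3$, and all other degrees unchanged. By induction $G'$ satisfies one of (1)--(3). A (1)-configuration uses degree-$2$ vertices distinct from $x, w$, so it lifts to $G$ verbatim. A (2)-configuration via a triangle $pqr$ in $G'$ with $d_{G'}(p) = 2$, $d_{G'}(q) = 3$ and $q \notin \{x, w\}$ also lifts. The delicate case is $q = x$ (symmetric for $q = w$): then $d_G(x) = 4$ and its four neighbours in $G$ are $\{p, r, w, u_0\}$ (here $w$ fills the third slot of $N_{G'}(x)$ by virtue of the chord $xw$), with $p$ and $u_0$ both of degree $2$; taking $x$ as the centre of (3), with $p, u_0$ as the two spokes and $r, w$ as the remaining neighbours, yields (3) in $G$ provided $r \neq w$. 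If instead $r = w$, then $N_G(p) = \{x, w\} = N_G(u_0)$, and since both vertices are degree $2$ they would each be cycle-adjacent to both $x$ and $w$, collapsing $|V(G)|$ to $4$ and contradicting $|V(G)| \geq 5$. Finally, a (3)-configuration in $G'$ with centre $w^*$ must have $w^* \notin \{x, w\}$, since otherwise $x$ (say) would have three distinct degree-$2$ neighbours $u^*, v^*, u_0$ in $G$, each demanding a cycle edge at $x$, contradicting the fact that $x$ has only two cycle-neighbours on $C$; consequently this configuration lifts as well.

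The main obstacle is precisely the case analysis in the residual subcase above: one must verify that a (2)-configuration in $G'$ centred at $x$ or $w$ always promotes to a (3)-configuration in $G$, rather than degenerating. Both the distinctness $r \neq w$ and the impossibility $w^* \in \{x, w\}$ rely on the same structural fact -- a degree-$2$ vertex in a $2$-connected outerplanar graph has its two edges on the Hamilton cycle, so any vertex of $G$ can host at most two degree-$2$ neighbours. A non-inductive alternative would examine the tree $T$ globally, e.g.\ locating two leaves of $T$ sharing the centre of a (3)-configuration, but the case work is of comparable length, so induction is the cleanest route.
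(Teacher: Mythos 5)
The paper does not prove this lemma; it is quoted verbatim from the cited reference of Wang and Zhang, so there is no in-paper argument to compare yours against. Your proof is correct and self-contained. I checked the points where it could plausibly break: $G'=G-u_0$ is indeed $2$-connected outerplanar (the cycle $C-u_0$ plus the chord $xw$ is Hamiltonian in $G'$), the degree bookkeeping $d_{G'}(x)=d_G(x)-1\ge 3$, $d_{G'}(w)=d_G(w)-1\ge 3$ guarantees that any degree-$2$ vertex of a configuration in $G'$ keeps degree $2$ in $G$, and the two delicate promotions both go through. In the $(2)$-case centred at $q=x$ one has $w\in N_{G'}(x)$ via the persisting chord and $w\neq p$ since $d_{G'}(w)\ge 3$, so $N_G(x)=\{p,r,w,u_0\}$ and the configuration $(3)$ centred at $x$ is genuine once $r\neq w$; your disposal of $r=w$ (forcing $C$ to be the $4$-cycle $xpwu_0$) is valid because a degree-$2$ vertex of a $2$-connected outerplanar graph has both edges on the Hamiltonian outer cycle. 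The same fact correctly rules out $w^*\in\{x,w\}$ in the $(3)$-case, since a vertex cannot have three distinct degree-$2$ neighbours. The one step you use silently is that the internal vertices $y_1,\dots,y_k$ of a leaf face of the weak dual all have degree $2$; this is standard (a chord at some $y_i$ would either cross $xw$ or subdivide the face $F_0$), but a sentence recording it would make the write-up airtight.
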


For the class of pseudo-outerplanar graphs, we have a similar structural theorem as Lemma \ref{strc_opg}. But it seems much more complex since crossings are permitted in a pseudo-outerplanar graph.

\begin{theorem} \label{2con}
Let $G$ be a pseudo-outerplanar diagram with $\delta(G)\geq 2$. Then $G$ contains one of the following configurations $G_1$--$G_{17}$. Moreover,\\
\indent {\rm (a)} if $G$ contains some configuration among $G_6$--$G_{17}$, then the drawing of this configuration in the figure is a part of the diagram of $G$ with its bending edges corresponding to the chords;\\
\indent {\rm (b)} if $G$ contains the configuration $G_3$ and $xy\not\in E(G)$, where $x$ and $y$ are the vertices of $G_3$ as described in the figure, then we can properly add an edge $xy$ to $G$ so that the resulting diagram is still pseudo-outerplanar.
\begin{center}
  \includegraphics[width=15.0cm,height=6cm]{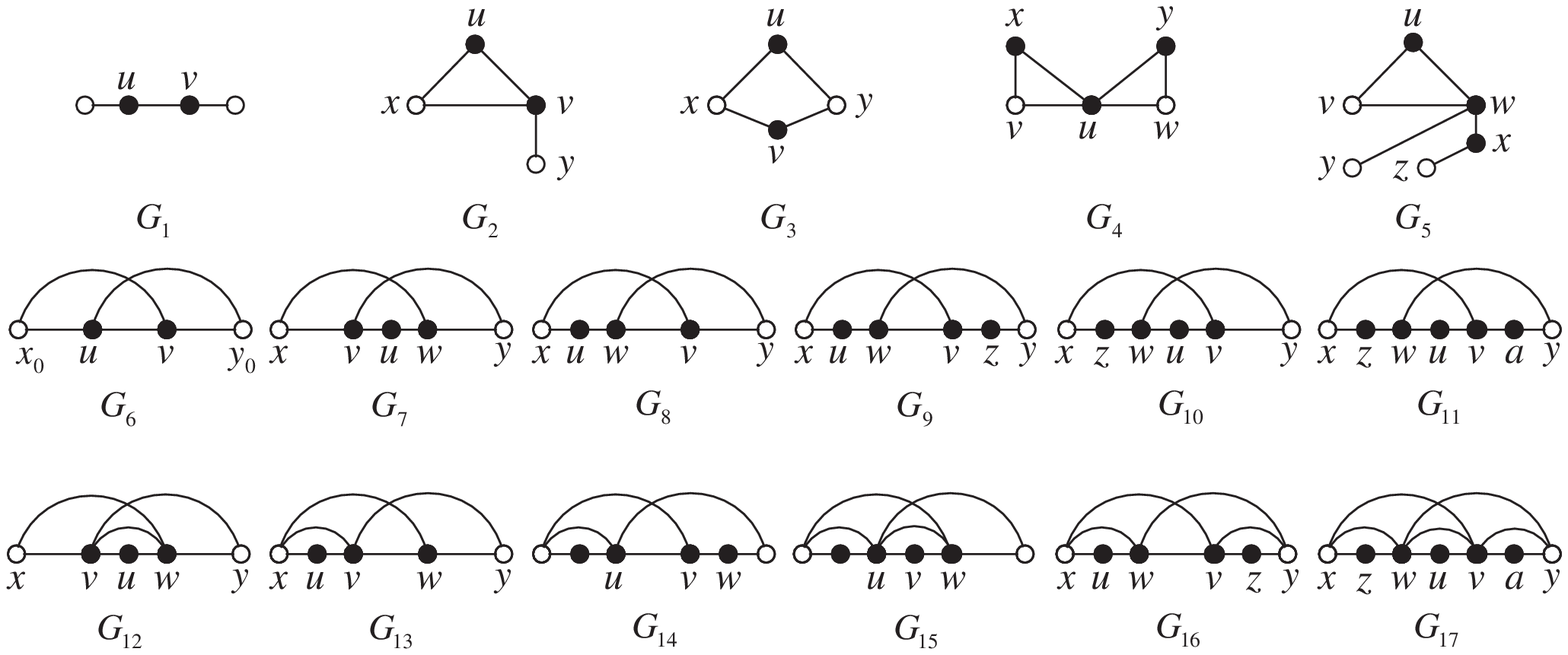}\\
  %\caption{Configurations of $2$-connected pseudo-outerplanar graphs}
  \label{config}
\end{center}
\end{theorem}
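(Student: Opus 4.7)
The plan is to adapt the proof of Lemma \ref{strc_opg} to the pseudo-outerplanar setting, handling the extra cases created by pairs of crossing chords. First, I would reduce the statement to a $2$-connected pseudo-outerplanar diagram. If $G$ has cut vertices, pick an end-block $B$; any configuration supported on non-cut vertices of $B$ automatically satisfies $\delta(G)\geq 2$ on its solid vertices, so it suffices to locate one of $G_1$--$G_{17}$ inside $B$. Using Corollary \ref{hamcor} (after possibly closing off the circumferential boundary of $B$) we may equip $B$ with a hamiltonian boundary $C=v_1v_2\cdots v_{|B|}v_1$, so that every edge of $B$ is either a boundary edge of $C$ or a chord. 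If $B$ has no chord at all, then $B=C$ and two adjacent $2$-vertices of $C$ are immediate, giving the configuration $G_1$.

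Next, I would pick an \emph{innermost} chord configuration, in the spirit of the classical outerplanar argument. Concretely, choose either a single non-crossed chord $v_iv_j$ so that $\mathcal{V}[v_i,v_j]$ is minimal subject to $\mathcal{C}[v_i,v_j]\setminus\{v_iv_j\}=\emptyset$, or a crossing pair $v_iv_j, v_kv_l$ with $i<k<j<l$ so that $\mathcal{V}[v_i,v_l]$ is minimal subject to $\mathcal{C}[v_i,v_l]\setminus\{v_iv_j,v_kv_l\}=\emptyset$. The minimality ensures that every vertex inside the selected region has its neighbors confined to that region, and by $\delta(G)\geq 2$ every such interior vertex has degree exactly $2$ or $3$ with a very restricted neighborhood.

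In the non-crossed case, a short enumeration on $|\mathcal{V}(v_i,v_j)|$ together with the possible presence of a single short chord $v_rv_{r+2}$ inside $\mathcal{V}[v_i,v_j]$ reproduces the three outerplanar patterns of Lemma \ref{strc_opg} and their two immediate pseudo-outerplanar relatives, which should correspond to $G_1$--$G_5$. For clause (b), the configuration $G_3$ is exactly such an innermost non-crossed situation: if $x,y$ are the two boundary endpoints of the triangular region and $xy\notin E(G)$, then since no other chord occupies $\mathcal{C}[v_i,v_j]$, placing $xy$ in the region creates no new crossing, so the resulting diagram remains pseudo-outerplanar. In the crossed case, the three arcs $\mathcal{V}(v_i,v_k)$, $\mathcal{V}(v_k,v_j)$, $\mathcal{V}(v_j,v_l)$ together with the four endpoints $v_i,v_k,v_j,v_l$ are the only carriers of edges in the region, and the shape of the diagram around the crossing is forced; this is what makes the drawings of $G_6$--$G_{17}$ in the figure literal sub-diagrams of $G$, proving clause (a).

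The main obstacle will be the bookkeeping in the crossed case: one has to verify exhaustively that the list $G_6$--$G_{17}$ covers every possibility according to (i) which of the three arcs between consecutive endpoints of the crossing pair are empty, (ii) which of the short boundary edges $v_iv_k$, $v_kv_j$, $v_jv_l$ and the wrap-around edge $v_lv_i$ are present, and (iii) the degrees of the corner vertices $v_i,v_k,v_j,v_l$ once constrained by $\delta(G)\geq 2$ and the pseudo-outerplanar restriction that no third chord occupies the region. A secondary subtlety is ensuring the innermost selection is unambiguous when several crossings coexist: one picks the crossing pair first by minimizing $|\mathcal{V}[v_i,v_l]|$ and only then checks that no nested chord survives, which together with the arc-by-arc degree analysis leaves exactly the twelve drawings $G_6$--$G_{17}$.
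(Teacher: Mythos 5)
Your overall strategy --- reduce to an end-block, locate an innermost crossing, and do a case analysis on the three arcs between consecutive endpoints of the crossing pair --- is the same as the paper's. But two steps as you describe them would fail. First, you cannot ``close off the circumferential boundary of $B$'' before looking for configurations. The configurations $G_1$--$G_{17}$ prescribe \emph{exact} degrees and adjacencies in $G$ (the solid vertices have no edges of $G$ other than those drawn), so a configuration found in an edge-augmented block need not be a subgraph of $G$: a $2$-connected pseudo-outerplanar block need not be hamiltonian (e.g.\ $K_{2,3}$), so some of the boundary edges you add are genuinely new, and a configuration that uses one of them (say the edge of $G_1$ joining its two $2$-vertices) simply is not present in $G$. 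The paper never augments the graph; instead it proves directly from $2$-connectivity that each arc $\mathcal{V}[v_i,v_k]$, $\mathcal{V}[v_k,v_j]$, $\mathcal{V}[v_j,v_l]$ is either a full path or a total non-edge, then uses the crossing-minimality of the diagram to show at most one arc is a non-edge, and it is precisely these non-edge cases that produce $G_3$ and the addable edge of clause (b). Your write-up makes the non-edge cases disappear by fiat, which is where the real content of (b) lives.

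Second, your innermost selection is too strong to be realizable. An innermost crossing pair $v_iv_j, v_kv_l$ (minimal $l-i$) can still have \emph{non-crossed} chords inside $\mathcal{C}[v_i,v_l]$ (e.g.\ $v_iv_k$, $v_kv_j$, $v_jv_l$), and an arc of the crossing can be a long path carrying further nested non-crossed chords; so the condition $\mathcal{C}[v_i,v_l]\setminus\{v_iv_j,v_kv_l\}=\emptyset$ cannot in general be met. The achievable condition is the paper's weaker one --- no other \emph{crossed} chords in $\mathcal{C}[v_i,v_l]$ --- after which one needs (i) a separate sub-argument showing that an arc which is a path on at least four vertices already contains one of $G_1,G_2,G_4$ (this is a nontrivial analysis of nested non-crossed chords, not a ``short enumeration''), and (ii) the count of the at most three surviving short chords, which is exactly the parameter distinguishing $G_6$--$G_{11}$ from $G_{12}$--$G_{17}$. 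Finally, in the cut-vertex reduction you assert that the configuration can be ``supported on non-cut vertices'' of the end-block, but this needs proof: one must check that in every branch of the case analysis the vertices whose degrees are left unspecified can be arranged to include the unique cut vertex (the paper does this by noting that only $v_i$ can be the cut vertex and that $v_i,v_l$ are never solid).
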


\begin{proof}
We first consider the case when $G$ is a $2$-connected pseudo-outerplanar diagram. Recall that this diagram minimizes the number of crossings. Let $v_1,v_2,\cdots,v_{|G|}$ be the vertices of this diagram lying in a clockwise sequence. If there is no crossings in $G$, then $G$ is an outerplanar graph and thus $G$ satisfies this theorem by Lemma \ref{strc_opg}. Otherwise, we can properly choose one chord $v_iv_j$ such that \vspace{1mm}

(1) $v_iv_j$ crosses $v_kv_l$ in $G$;

(2) $v_i,v_k,v_j$ and $v_l$ are lying in a clockwise sequence;

(3) besides $v_iv_j$ and $v_kv_l$, there is no crossed chords in $\mathcal{C}[v_i,v_l]$.\vspace{1mm}

The condition (3) can be easily fulfilled, because otherwise we could change the values of $i$ and $j$ to meet this condition (note that the values of $k$ and $l$ are determined by $i$ and $j$). Without loss of generality, assume that $1\leq i<k<j<l\leq |G|$, because otherwise we can adjust the labellings of the vertices in $G$ to meet it.

\vspace{1mm}\noindent \emph{Claim $1$. $\mathcal{V}[v_i,v_k]$ is
either non-edge or path, and so do $\mathcal{V}[v_k,v_j]$
and $\mathcal{V}[v_j,v_l]$.}

\vspace{1mm}We only need to prove that $\mathcal{V}[v_i,v_k]$ cannot be subpath. Otherwise there exists two vertices $v_m$ and $v_{m+1}$, where $i\leq m\leq k-1$, such that $v_mv_{m+1}\not\in E(G)$. If there are chords in the form $v_av_{m+1}$ such that $i\leq a\leq m-1$, then we choose one among them such that $a$ is maximum. One can see that $v_a$ is a vertex cut of $G$, because there is no edges between $\mathcal{V}[v_{a+1},v_m]$ and $\mathcal{V}[v_{m+1},v_{a-1}]$ by the choice of $a$ and (3). This contradicts the fact that $G$ is 2-connected. Thus there is no chords in the form $v_av_{m+1}$ such that $i\leq a\leq m-1$. Similarly, there is no chords in the form $v_mv_{b}$ such that $m+2\leq b\leq k$.
Let $p=\max\{n|v_{m+1}v_n\in E(G),m+1<n\leq k\}$ and $q=\min\{n|v_nv_m\in E(G),i\leq n<m\}$. Since $\mathcal{V}[v_i,v_k]$ is neither non-edge nor path, we have $k-i\geq 2$ and thus at least one of the integers $p$ and $q$ exists. Without loss of generality suppose that $p$ exists. Then $v_p$ is a vertex cut of $G$, because there is no edges between $\mathcal{V}[v_{m+1},v_{p-1}]$ and $\mathcal{V}[v_{p+1},v_{m}]$ by the choices of $m$, $p$ and by (3). This contradiction completes the proof of Claim 1.

\vspace{1mm}\noindent \emph{Claim $2$. If $\mathcal{V}[v_i,v_k]$ is a path and $k-i\geq 3$, then $G$ has a subgraph isomorphic to one of the configurations $\{G_1,G_2,G_4\}$. This result also holds for $\mathcal{V}[v_k,v_j]$ and $\mathcal{V}[v_j,v_l]$ if
$j-k\geq 3$ and $l-j\geq 3$, respectively.}

\vspace{1mm} Suppose that there is no other chord except $v_iv_k$ (if exists) in $\mathcal{V}[v_i,v_k]$, then the configuration $G_1$ occurs, since $k-i\geq 3$. So we assume that $S:=\mathcal{C}[v_i,v_k]\setminus \{v_iv_k\}\neq \emptyset$. Now we prove that there exists at least one chord in $S$ that contains at least one other chord. Suppose that such a chord does not exist. Then we first choose a chord $v_mv_n\in S ~(m<n)$. Without loss of generality, assume that $n\neq k$. If $n-m\geq 3$, then we can easily find a copy of $G_1$ in $G$, since $v_mv_n$ contains no other chords by our assumption. If $n-m=2$, then it is trivial to see that $d(v_{m+1})=2$. Now if $\min\{d(v_m),d(v_n)\}\leq 3$, then a copy of $G_2$ would be found. Thus we shall assume that $\min\{d(v_m),d(v_n)\}\geq 4$. So there exists another chord $v_nv_p~(n<p)$ in $S$, since $d(v_n)\geq 4$ and $v_mv_n$ cannot be contained in a chord in the form $v_qv_n~(q<n)$ by the assumption. Similarly, we shall assume that $p-n=2$ and $d(v_{n+1})=2$ for otherwise the configuration $G_1$ would be found. Now one can see that $d(v_n)=4$, because otherwise there would be chord in $S$ that contains either $v_mv_n$ or $v_nv_p$, a contradiction. Therefore, the graph induced by $\mathcal{V}[v_m,v_p]$ contains the configuration $G_4$. Thus we can choose one chord $v_av_b\in S~(a<b)$ such that $v_av_b$ contains at least one chord, and furthermore, every chord contained in $v_av_b$ contains no other chords (this condition can be easily fulfilled by properly changing the values of $a$ and $b$ if necessary). Let $v_mv_n~(m<n)$ be the chord contained in $v_av_b$. Then by the similar argument as above, we have to consider the case when $n-m=2$, $d(v_{m+1})=2$ and $\min\{d(v_m),d(v_n)\}\geq 4$. Without loss of generality, assume that $n\neq b$. Then there must be a chord $v_nv_p~(n<p\leq b)$ in $S$, since $d(v_n)\geq 4$ and $v_mv_n$ can not be contained in a chord in the form $v_qv_n~(q<n)$ by the choices of $a$ and $b$. By the similar argument as before, if $G$ contains no copies of $G_1$ or $G_2$, then $p-n=2$ and $d(v_{n+1})=2$. Furthermore, one can similarly prove that $d(v_n)=4$ by the choices of $a$ and $b$.
Thus we would find a copy of $G_4$ in the graph induced by $\mathcal{V}[v_m,v_p]$.

\vspace{1mm}\noindent \emph{Claim $3$. At most one of
$\mathcal{V}[v_i,v_k]$, $\mathcal{V}[v_k,v_j]$ and
$\mathcal{V}[v_j,v_l]$ can be non-edge.}

\vspace{1mm} If $\mathcal{V}[v_i,v_k]$ and $\mathcal{V}[v_k,v_j]$ are non-edge, then it is trivial that $v_l$ is a vertex cut of $G$, contradicting the fact that $G$ is $2$-connected. If $\mathcal{V}[v_i,v_k]$ and $\mathcal{V}[v_j,v_l]$ are non-edge, then we can adjust the drawing of $G$ by replacing the vertices order $\{v_i,v_k,v_{k+1},\cdots,v_{j-1},v_j,v_l\}$ with $\{v_i,v_j,v_{j-1},\cdots,v_{k+1},v_k,v_l\}$. This operation can reduce the number of crossings in the drawing of $G$ by one, contradicting the
assumption that this diagram minimizes the number of crossings.

\vspace{1mm}\noindent \emph{Claim $4$. If one of
$\mathcal{V}[v_i,v_k]$, $\mathcal{V}[v_k,v_j]$ and
$\mathcal{V}[v_j,v_l]$ is non-edge, then $G$ has a subgraph
isomorphic to one of the configurations $\{G_1,G_2,G_3\}$.}

\vspace{1mm}Suppose that $\mathcal{V}[v_i,v_k]$ is a non-edge. By Claims 1--3, both $\mathcal{V}[v_k,v_j]$ and $\mathcal{V}[v_j,v_l]$ are paths with $1\leq j-k\leq 2$ and $1\leq l-j\leq 2$. If $j-k=2$ and $v_kv_j\in E(G)$, then it is clear that $d(v_k)=3$ and $d(v_{k+1})=2$, implying that the configuration $G_2$ occurs. If $j-k=2$ but $v_kv_j\not\in E(G)$, then $d(v_k)=d(v_{k+1})=2$, implying that the configuration $G_1$ occurs. So we assume that $j=k+1$. If $l=j+2$, then $d(v_{j+1})=2$ whenever $v_jv_l$ is an chord or not. In this case the configuration $G_3$ occurs since $d(v_k)=2$, and moveover, $G+v_jv_l$ is still pseudo-outerplanar if $v_jv_l\not\in E(G)$.
So we assume that $l=j+1$. Now $v_k,v_j,v_l$ form a triangle satisfying $d(v_k)=2$ and $d(v_j)=3$. So the configuration $G_2$ occurs. The case when $\mathcal{V}[v_j,v_l]$ is
a non-edge can be dealt with similarly.

Now suppose that $\mathcal{V}[v_k,v_j]$ is a non-edge. By Claims 1--3, both $\mathcal{V}[v_i,v_k]$ and $\mathcal{V}[v_j,v_l]$ are paths with $1\leq k-i\leq 2$ and $1\leq l-j\leq 2$. If $k-i=2$ or $j-l=2$, by the similar argument as before, we either have $d(v_{k-1})=d(v_k)=2$ or have $d(v_j)=d(v_{j+1})=2$, implying that the configuration $G_1$ occurs. So we assume that $k-i=l-j=1$. In this case the four vertices $v_i,v_j,v_l$ and $v_k$ form a quadrilateral with $d(v_i)=d(v_k)=2$, which implies that the configuration $G_3$ occurs in $G$ and furthermore, $G+v_iv_l$ is still pseudo-outerplanar if $v_iv_l\not\in E(G)$.

\vspace{1mm}In the following, we assume that $\mathcal{V}[v_i,v_k]$, $\mathcal{V}[v_k,v_j]$ and $\mathcal{V}[v_j,v_l]$ are all paths, where $\max\{k-i,j-k,l-j\}\leq 2$. Set $X=\mathcal{C}[v_i,v_l]\backslash \{v_iv_j,v_kv_l\}$ and $x=|X|$. It
is clear that $x\leq 3$.

\vspace{1mm}\noindent \emph{Claim $5$. If $x=0$, then $G$ has a subgraph isomorphic to one of the configurations $G_6$--$G_{11}$; If $x=1$, then $G$ has a subgraph isomorphic one of the configurations $\{G_5,G_{12},G_{13},G_{14}\}$; If $x=2$, then $G$ has a subgraph isomorphic to one of the configurations $\{G_5, G_{15},G_{16}\}$; If $x=3$, then $G$ has a subgraph
isomorphic to the configuration $G_{17}$.}

\vspace{1mm}Here, we just show the case when $x=2$ and $v_kv_j,v_jv_l\in X$ for example, and leave the discussions about other cases to the readers since they are quite similar. In fact, if $k-i=1$ (resp. $k-i=2$), then the configuration $G_{15}$ (resp. $G_5$) would occurs in $G$ since $d(v_k)=4$ and $d(v_{i+1})=d(v_{k+1})=d(v_{j+1})=2$, and furthermore the drawing of the configuration $G_{15}$ (resp. $G_5$) in the figure is just a part of the diagram of $G$ with its bending edges corresponding to the chords.

\vspace{1mm} Until now, Claims 1-5 just complete the proof of this theorem for
the case when $G$ is $2$-connected. Now we suppose that $G$ has at least two blocks. Let $B$ be an end block and let $v_1,v_2,\cdots,v_{|B|}$ be the vertices of $B$ that lies in a clockwise sequence. Without loss of generality, let $v_1$ be the unique cut vertex of $B$.

\vspace{1mm}\noindent \emph{Claim $6$. $B$ is an outerplanar graph.}

\vspace{1mm} We prove that there is no crossings in $B$. Suppose, to the contrary, that there is a chord $v_iv_j$ that crosses another chord $v_kv_l$, where $1\leq i<k<j<l$. Note that the chord $v_iv_j$ satisfies (1) and (2) now. If it does not fulfill (3) at this stage. Then there must be at least one pair of mutually crossed chords contained in either $\mathcal{C}[v_i,v_k]$, or $\mathcal{C}[v_k,v_j]$, or $\mathcal{C}[v_j,v_l]$. We choose one pair $v_av_b$ and $v_cv_d$ among them such that $a<c<b<d$ and there is no other crossed chords in $\mathcal{C}[v_a,v_d]$ besides $v_av_b$ and $v_cv_d$. Now set $i:=a$, $j:=b$, $k:=c$ and $l:=d$. Therefore, in any case we can find a pair of mutually crossed chords, $v_iv_j$ and $v_kv_l$, such that $1\leq i<k<j<l$ and the three conditions at the beginning of the proof are fulfilled. Note that $B$ is an 2-connected pseudo-outerplanar diagram. Thus we can set $v_i$, $v_j$, $v_k$, $v_l$ as we did in the 2-connected case. Recall the proofs of Claims 1-5, every time we find a copy of some configuration the vertices $v_i$ and $v_l$ cannot be the solid vertices (i.e. the degrees of them in the configuration shall not necessarily to be confirmed). For a vertex $v\in V(B)\setminus \{v_1\}$, its degree in $B$ is equal to its degree in $G$, since $B$ is an end block and $v_1$ is the unique cut vertex of the $B$. Among the vertices in $\mathcal{V}[v_i,v_l]$, only $v_i$ may be the cut vertex since $1\leq i<k<j<l$. Therefore, the proofs of Claims 1-5 are also valid for this claim and then the same results would be obtained.

\vspace{1mm}\noindent \emph{Claim $7$. $B$ has a subgraph isomorphic
to one of the configurations $\{G_1,G_2,G_4\}$ in such a way that $v_1$
is not a solid vertex.}

\vspace{1mm} Since $B$ is a $2$-connected outerplanar graph, $B$ is hamiltonian. So $\mathcal{V}[v_1,v_{|B|}]$ is a path. The proof of Claim 2 implies that if $\mathcal{V}[v_i,v_k]$ is a path with $k-i\geq 3$ such that there is no crossed edges in $\mathcal{C}[v_i,v_k]$ and no edges between $\mathcal{V}(v_i,v_k)$ and $\mathcal{V}(v_k,v_i)$, then $G$ contains one of $\{G_1,G_2,G_4\}$ in such a way that $v_i$ and $v_k$ are not the solid vertices. Thus in this claim, if $|B|\geq 4$, then we set $i:=1$, $k:=|B|$ and come back to the proof of Claim 2. If $|B|\leq 3$, then it is trivial to see that $G_1$ would appear. This contradiction completes the proof of the theorem for
the case when $G$ has cut vertices.
\end{proof}

The following is a straightforward corollary of Theorem \ref{2con}.

\begin{corollary}\label{mindegree3}
Each pseudo-outerplanar graph contains a vertex of degree at most $3$.
\end{corollary}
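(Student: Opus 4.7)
The plan is to deduce the corollary directly from Theorem \ref{2con} by a short case analysis. If $G$ has a vertex $v$ with $d_G(v)\leq 1$, we are immediately done, so we may assume $\delta(G)\geq 2$. Under this hypothesis, Theorem \ref{2con} guarantees that $G$ contains one of the seventeen unavoidable configurations $G_1,\ldots,G_{17}$, so it suffices to show that each of these configurations exhibits a vertex whose degree in $G$ is at most $3$.

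To carry this out, I would inspect the figure configuration by configuration, using the convention stated in Section 4 that each solid vertex has no edges of $G$ incident with it other than those shown. In every one of $G_1$--$G_{17}$ at least one solid vertex is drawn with only two or three incident edges (for example, the configurations $G_1$, $G_2$, $G_3$ already exhibit $2$-vertices, while the more intricate crossed configurations $G_4$--$G_{17}$ each contain solid vertices of degree $2$ arising between consecutive non-crossed edges on the boundary). Since the solid-vertex convention records the true degree in $G$, one such vertex has degree at most $3$ in $G$, which is precisely the conclusion we want.

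The only real work is the routine verification that no configuration consists entirely of non-solid (i.e.\ open-degree) vertices; a glance at the figure confirms this. Hence the main obstacle is not conceptual but merely the bookkeeping of checking the seventeen pictures, which is entirely mechanical once the convention on solid vertices is invoked. This completes the outline of the proof.
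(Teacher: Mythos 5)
Your argument is correct and is exactly the derivation the paper intends: the authors simply call the corollary a ``straightforward'' consequence of Theorem \ref{2con}, and your reduction to the case $\delta(G)\geq 2$ followed by the observation that every configuration $G_1$--$G_{17}$ contains a solid vertex of degree $2$ or $3$ (as the proof of Theorem \ref{2con} makes explicit when it constructs each configuration) fills in precisely that step. No gap.
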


\section{Edge Chromatic Number and Linear Arboricity}

In this section, we aim to consider the problems of covering a pseudo-outerplanar graph $G$ with $\Delta(G)$ matchings or $\lceil\frac{\Delta(G)}{2}\rceil$ linear forests. A graph $G$ is \emph{$\chi'$-critical} if $\chi'(G)=\Delta(G)+1$ but $\chi'(H)\leq \Delta(G)$ for any proper subgraph $H\subset G$, is \emph{la-critical} if $la(G)>\lceil\frac{\Delta(G)}{2}\rceil$ but $la(H)\leq \lceil\frac{\Delta(G)}{2}\rceil$ for any proper subgraph $H\subset G$.

%In the proof of Theorems \ref{edge chromatic number} and \ref{linear arborocity}, we consider a minimal counterexample $G$ to the theorem (hence $G$ is critical). Obviously $\delta(G)\geq 2$, so $G$ contains at least one of the configurations $G_1$--$G_{17}$ by Theorem \ref{2con}.

%Then we delete some vertices from some configuration that $G$ contains and therefore get a new graph $G'$ with smaller size than $G$. Following the proof of Theorem \ref{strc_opg}, one can easily prove the pseudo-outerplanarity of $G'$ in each case. So by induction, we can color $G'$ such that the subgraph induced by each color set is a matching or a linear forest. Then we construct a coloring $\col$ of $G$ from the coloring $\phi$ of $G'$.

\begin{lemma} \label{ed}
If $G$ is $\chi'$-critical and $uv\in E(G)$, then $d(u)+d(v)\geq \Delta(G)+2$.
\end{lemma}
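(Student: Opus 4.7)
The plan is to use the criticality of $G$ in the standard way: remove the edge $uv$ and exploit the resulting $\Delta$-edge-coloring. Let $\Delta=\Delta(G)$. Since $G$ is $\chi'$-critical, $G-uv$ admits a proper edge coloring $\col$ with the $\Delta$ colors $\{1,2,\dots,\Delta\}$. I will analyze the palettes at $u$ and $v$.

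First I would count missing colors. In $G-uv$ the vertex $u$ has degree $d(u)-1$, so the set $M(u)$ of colors not appearing on any edge of $G-uv$ incident to $u$ satisfies $|M(u)|\geq \Delta-(d(u)-1)=\Delta-d(u)+1$, and likewise $|M(v)|\geq \Delta-d(v)+1$. The key observation is that $M(u)\cap M(v)=\emptyset$: if some color $c$ were missing at both $u$ and $v$, then extending $\col$ by giving the edge $uv$ color $c$ would yield a proper $\Delta$-edge-coloring of $G$, contradicting $\chi'(G)=\Delta+1$.

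Therefore $M(u)$ and $M(v)$ are disjoint subsets of $\{1,\dots,\Delta\}$, so
\[
(\Delta-d(u)+1)+(\Delta-d(v)+1)\leq |M(u)|+|M(v)|=|M(u)\cup M(v)|\leq \Delta,
\]
which rearranges to $d(u)+d(v)\geq \Delta+2$, as required.

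There is essentially no obstacle here; the only thing to be careful about is the bookkeeping that $d_{G-uv}(u)=d(u)-1$ (and the same for $v$), so the ``$+1$'' terms come out correctly, and the disjointness argument must genuinely use the criticality hypothesis (it is exactly the step where $G-uv$ gains a proper coloring that $G$ lacks).
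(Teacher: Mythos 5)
Your proof is correct and is exactly the standard argument: the paper does not prove this lemma itself but cites it to Bondy--Murty, and your derivation (color $G-uv$ with $\Delta$ colors by criticality, observe the missing-color sets at $u$ and $v$ must be disjoint or else $uv$ could be colored, then count) is the classical proof found there. The bookkeeping with $d_{G-uv}(u)=d(u)-1$ and the disjointness inequality are both handled correctly.
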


\begin{lemma} \label{la}
If $G$ is la-critical and $uv\in E(G)$, then $d(u)+d(v)\geq 2\lceil\frac{\Delta(G)}{2}\rceil+2$.
\end{lemma}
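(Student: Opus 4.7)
The plan is a direct analogue of the standard Vizing-style adjacency argument used for Lemma~\ref{ed}, adapted to linear forests. I argue by contradiction: suppose there is an edge $uv\in E(G)$ with $d(u)+d(v)\leq 2\lceil\Delta(G)/2\rceil+1$, and set $k=\lceil\Delta(G)/2\rceil$ for brevity. Since $G-uv$ is a proper subgraph of $G$ and la-criticality gives $la(G-uv)\leq k$, I fix a decomposition $E(G-uv)=E(F_1)\cup\cdots\cup E(F_k)$ into $k$ linear forests (viewed as spanning subgraphs, so isolated vertices are permitted).

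Next, I do a pigeonhole count at the two endpoints. Since
$$\sum_{i=1}^{k}\bigl(d_{F_i}(u)+d_{F_i}(v)\bigr)=d_{G-uv}(u)+d_{G-uv}(v)=d(u)+d(v)-2\leq 2k-1,$$
some index $i_0\in\{1,\dots,k\}$ satisfies $d_{F_{i_0}}(u)+d_{F_{i_0}}(v)\leq 1$. Relabeling $u,v$ if necessary, I may assume $d_{F_{i_0}}(u)=0$ and $d_{F_{i_0}}(v)\leq 1$, so in $F_{i_0}$ the vertex $u$ is isolated while $v$ is either isolated or an endpoint of a path component.

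I then recolor by placing $uv$ into $F_{i_0}$. Because $u$ is isolated in $F_{i_0}$, adding $uv$ either creates a new $P_2$ component or extends at $v$ the path having $v$ as an endpoint; in either case $F_{i_0}+uv$ has maximum degree at most $2$, and no cycle is created (the endpoint $u$ was not in any component of $F_{i_0}$ at all). The resulting $k$ subgraphs therefore form a decomposition of $E(G)$ into linear forests, yielding $la(G)\leq k=\lceil\Delta(G)/2\rceil$ and contradicting the la-criticality of $G$. There is no real obstacle beyond verifying that the modified $F_{i_0}$ is still a linear forest, which follows from the isolation of $u$; all other steps are routine counting.
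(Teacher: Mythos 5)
Your proof is correct and is exactly the standard argument for this adjacency lemma: the paper does not prove it but refers to [Wu], where the same delete--decompose--pigeonhole--reinsert reasoning appears. The only point needing care is that $F_{i_0}+uv$ stays acyclic, and you handle it properly by noting that the isolated vertex $u$ lies in its own trivial component of $F_{i_0}$.
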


The above two lemmas are very classic and useful; their proofs can be found in \cite{Bondy} and \cite{Wu} respectively.
Given a coloring $\varphi$ of $G$, $c_j(v)$ denotes the number of edges incident with $v$ colored by $j$. Let $C_{\varphi}^{i}(v)=\{j|c_j(v)=i\}$, $i=0,1,2$.
Then $C_{\varphi}^{0}(v)\cup C_{\varphi}^{1}(v)=\{1,2,\cdots,k\}$ if $\varphi$ is a proper $k$-edge-coloring, and $C_{\varphi}^{0}(v)\cup C_{\varphi}^{1}(v)\cup C_{\varphi}^{2}(v)=\{1,2,\cdots,k\}$ if $\varphi$ is a $k$-tree-coloring. For brevity, in the proof of Theorem \ref{edge chromatic number} we use the notion \emph{$k$-coloring} to replace the statements of proper $k$-edge-coloring or $k$-tree-coloring and use the notion \emph{PO-graph} to replace the statement of pseudo-outerplanar graph. For a graph $G$ and two distinct vertices $u,v\in V(G)$, denote by $G+xy$ the graph obtained from $G$ by adding an new edge $xy$ if $xy\not\in E(G)$, or $G$ itself if $xy\in E(G)$.

\begin{theorem} \label{edge chromatic number}
Let $G$ be a pseudo-outerplanar graph. If $\Delta(G)\geq 4$, then $\chi'(G)=\Delta(G)$.
\end{theorem}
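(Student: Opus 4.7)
The plan is a standard minimum counterexample argument driven by the structural Theorem \ref{2con}. Suppose the theorem fails, and let $G$ be a counterexample minimizing $|E(G)|$, so $G$ is $\chi'$-critical with $\Delta:=\Delta(G)\geq 4$. By Lemma \ref{ed}, every edge $uv\in E(G)$ satisfies $d(u)+d(v)\geq \Delta+2\geq 6$. In particular, $\delta(G)\geq 2$, so Theorem \ref{2con} applies and $G$ contains one of the seventeen configurations $G_1,\ldots,G_{17}$. The goal is to show that each of these configurations is incompatible with the inequality $d(u)+d(v)\geq 6$ for every edge, possibly after a short Vizing-style recoloring argument.

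For the "easy" configurations, the contradiction is immediate from the degree-sum inequality. For instance, $G_1$ exhibits two adjacent $2$-vertices giving $d(u)+d(v)=4<6$; $G_2$ contains an edge joining a $2$-vertex to a $3$-vertex, giving $4+1=5<6$; and several of $G_3$--$G_{11}$ likewise contain an edge whose endpoints have degrees summing to at most $5$ (the configurations were precisely designed around low-degree "reducible" patterns). I would run through the list and mark off every configuration that is excluded in this way. For the configuration $G_3$, I would first invoke part (b) of Theorem \ref{2con} to add the edge $xy$ to obtain a quadrilateral in a still-pseudo-outerplanar diagram of $G$, and then apply the same degree-sum argument inside this augmented graph (noting that a proper $\Delta$-edge-coloring of the augmented graph restricts to one of $G$).

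The remaining configurations — those among $G_{12}$--$G_{17}$ in which all degree-sums happen to be at least $6$ when $\Delta=4$ or $5$ — require the standard critical-edge recoloring. For such a configuration, pick a carefully chosen edge $e=uv$ with $u$ of small degree in the configuration; by minimality of $G$, the graph $G-e$ admits a proper $\Delta$-edge-coloring $\col$. Then $|C_\col^0(u)|\geq \Delta-d_G(u)+1\geq 1$ and similarly $|C_\col^0(v)|\geq 1$, and to extend $\col$ to $e$ one either finds a common missing color, or runs a Kempe $(\alpha,\beta)$-chain from $u$ for a color $\alpha\in C_\col^0(u)$ and some $\beta\in C_\col^0(v)$. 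The pseudo-outerplanar drawing of the configuration (which by clause (a) of Theorem \ref{2con} is part of the actual diagram, with bending edges representing chords) constrains where such a Kempe chain can go; using the low-degree vertices exposed by the configuration one shows the chain cannot both start at $u$ and end at $v$, so a swap produces a coloring that extends to $e$.

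The main obstacle will be the bookkeeping for the handful of dense configurations (I expect $G_{15}$, $G_{16}$, $G_{17}$, which cluster four crossed chords around a common vertex of degree $4$) where multiple low-degree vertices sit on a crossed 4-cycle and several Kempe-chain cases must be handled in parallel. Once those are disposed of, every configuration in $\{G_1,\ldots,G_{17}\}$ is reducible, contradicting the existence of $G$ and completing the proof.
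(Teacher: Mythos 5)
Your overall skeleton coincides with the paper's: minimal counterexample, $\chi'$-criticality, Lemma \ref{ed} forcing $d(u)+d(v)\geq\Delta(G)+2$ on every edge, and Theorem \ref{2con} producing a configuration, after which the configurations containing an edge of low degree-sum are excluded and the survivors must be shown reducible. But the proposal stops exactly where the real work begins, and two of its concrete claims are wrong. First, $G_3$ does not fall to a degree-sum argument, in $G$ or in any augmented graph: in $G_3$ the two $2$-vertices $u,v$ are each joined to $x$ and $y$, so every edge of the configuration has degree-sum $2+\Delta(G)=\Delta(G)+2$, which satisfies Lemma \ref{ed} with equality and yields no contradiction. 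Moreover, adding $xy$ produces a supergraph of $G$, which is at least as hard to $\Delta(G)$-edge-color; the remark that ``a coloring of the augmented graph restricts to one of $G$'' gives you nothing because you have no such coloring to start from. The paper instead deletes $u$ and $v$, colors $G'=G\setminus\{u,v\}$ by minimality (or Vizing), and extends by an explicit case analysis on whether $C^1_\phi(x)=C^1_\phi(y)$; clause (b) of Theorem \ref{2con} is used only in the linear-arboricity proof, where two edges of one color class may meet at $x$.

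Second, the set of configurations surviving the degree-sum filter is $\{G_3,G_4,G_5,G_6,G_{12},G_{13},G_{16},G_{17}\}$, not the three you guess ($G_{15}$ is in fact eliminated, while $G_4,G_5,G_6,G_{12},G_{13}$ all require reductions), and the paper does not handle them by deleting a single edge and chasing Kempe chains. For each surviving configuration it deletes a set of vertices, sometimes adding edges (e.g. $G'=G\setminus\{u,v\}+x_0y_0$ for $G_6$ and $G'=G\setminus\{v,w\}+xy+ux+uy$ for $G_{12}$), which is where clause (a) of Theorem \ref{2con} is genuinely needed --- to certify that the modified graph is still pseudo-outerplanar so that minimality applies; it then writes down an explicit extension for each possible color pattern at the attachment vertices, occasionally recoloring one or two old edges. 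Your alternative --- delete one edge and argue that the drawing ``constrains where a Kempe chain can go'' --- is unsubstantiated and doubtful as stated, since an alternating $(\alpha,\beta)$-path in an edge coloring is not controlled by the embedding in any direct way. The reducibility of these eight configurations is the actual content of the theorem, and it is missing from the proposal.
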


\begin{proof}
Suppose for a contradiction that there exists a minimal (in terms of the size) pseudo-outerplanar diagram $G$ with $\Delta(G)\geq 4$ that has no $\Delta(G)$-coloring. One can easily observe that $G$ is $2$-connected and $\chi'$-critical. By Theorem \ref{2con} and Lemma \ref{ed}, $G$ contains at least one of the configurations $\{G_3,G_4,G_5,G_6,G_{12},G_{13},G_{16},G_{17}\}$. Set $S=\{1,2,\cdots,\Delta(G)\}$.

If $G\supseteq G_3$, then the pseudo-outerplanar graph $G'=G\backslash \{u,v\}$ admits a $\Delta(G)$-coloring $\phi$ by induction hypothesis (when $\Delta(G')=\Delta(G)$) or Vizing's Theorem (when $\Delta(G')\leq \Delta(G)-1$). Construct a $\Delta(G)$-coloring $\col$ of $G$ as follows. If $C^1_{\phi}(x)=C^1_{\phi}(y):=L$ (notice that $|L|= \Delta(G)-2$ by Lemma \ref{ed}), then let $\col(ux)=\col(yv)\in S\setminus L$ and $\col(uy)=\col(xv)\in S\setminus (L\cup \{\col(ux)\})$. If $C^1_{\phi}(x)\neq C^1_{\phi}(y)$, then $(S\setminus C^1_{\phi}(x))\cap C^1_{\phi}(y)\neq \emptyset$ since $d(x)=d(y)=\Delta(G)$ by Lemma \ref{ed}. Let $\col(ux)\in (S\setminus C^1_{\phi}(x))\cap C^1_{\phi}(y)$, $\col(xv)\in S\setminus (C^1_{\phi}(x)\cup \{\col(ux)\})$, $\col(vy)\in S\setminus (C^1_{\phi}(y)\cup \{\col(xv)\})$ and $\col(uy)\in S\setminus (C^1_{\phi}(y)\cup \{\col(yv)\})$.
In each case, we color the remain edges of $G$ by the same colors used in $\phi$. Thus, we have constructed a $\Delta(G)$-coloring $\col$ of $G$ from the $\Delta(G)$-coloring $\phi$ of $G'$. In the next cases, while constructing a coloring $\col$ of $G$ from the coloring $\phi$ of $G'$, we only give the colorings for the edges in $E(G)\setminus E(G')$, since for every edge $e\in E(G)\cap E(G')$ we always let $\col(e)=\phi(e)$.

If $G\supseteq G_4$, we shall assume that $d(v)=d(w)=\Delta(G)=4$ because of Lemma \ref{ed}. Then the PO-graph $G'=G\setminus\{x,y,u\}$ admits a 4-coloring $\phi$. Construct a 4-coloring $\col$ of $G$ as follows, where two cases are considered without loss of generality (wlog. for short).
If $C^1_{\phi}(v)=C^1_{\phi}(w)=\{1,2\}$, then let $\col(uy)=1$, $\col(ux)=2$, $\col(uw)=\col(vx)=3$ and $\col(uv)=\col(wy)=4$. If $C^1_{\phi}(v)=\{1,2\}$, $1\not\in C^1_{\phi}(w)$ and $3\in C^1_{\phi}(w)$, then let $\col(uw)=1$, $\col(ux)=2$, $\col(xv)=\col(uy)=3$, $\col(uv)=4$ and $\col(wy)\in \{2,3,4\}\setminus C^1_{\phi}(w)$.

If $G\supseteq G_5$, we shall assume that $d(v)=\Delta(G)=4$ because of Lemma \ref{ed}. Then the PO-graph $G'=G\setminus\{u\}$ admits a 4-coloring $\phi$.
One can easily see that $(C^1_{\phi}(v)\cap C^1_{\phi}(w))\setminus \{\phi(vw)\}\neq \emptyset$, because otherwise $vw$ would be incident with four colors under $\phi$. Assume that $C^1_{\phi}(v)=\{1,2,3\}$ and $\phi(vw)=3$ wlog. If $C^1_{\phi}(w)\neq C^1_{\phi}(v)$, then assume that $C^1_{\phi}(w)=\{1,3,4\}$ wlog. Whereafter, we can extend $\phi$ to a 4-coloring of $\col$ of $G$ by taking $\col(uv)=4$ and $\col(uw)=2$.
If $C^1_{\phi}(w)=C^1_{\phi}(v)$, then we consider two subcases. If $\phi(xz)=4$, then construct a 4-coloring of $G$ by recoloring $wx$ and $wv$ with 3 and 4, and coloring $uv$ and $uw$ with 3 and 2, respectively. If $\phi(xz)\neq 4$, then construct a 4-coloring of $G$ by recoloring $wx$ with 4 and coloring $uv$ and $uw$ with 4 and 2, respectively.

If $G\supseteq G_6$, we shall assume that $\min\{d(x_0),d(y_0)\}\geq 3$ and $\Delta(G)=4$ by Lemma \ref{ed}. Assume first that $d(x_0)=d(y_0)=4$. If $x_0y_0\not\in E(G)$, then let $N(x_0)=\{u,v,x_1,x_2\}$ and $N(y_0)=\{u,v,y_1,y_2\}$. Let $G'=G\setminus\{u,v\}+x_0y_0$. By Lemma \ref{2con}, the configuration $G_6$ is a part of the pseudo-outerplanar diagram of $G$. Thus $G'$ can also be a PO-graph and thus $G'$ admits a 4-coloring $\phi$ by the minimality of $G$. Set $M=\{\phi(x_0x_1),\phi(x_0x_2),\phi(y_0y_1),\phi(y_0y_2)\}$ and $m=|M|$. Since the colors used in $\phi$ is at most four and $x_0y_0\in E(G')$, $m\leq 3$ (otherwise the edge $x_0y_0$ cannot be colored under $\phi$ because it is already incident with four colored edges). If $m=3$, assume that $\phi(x_0x_1)=\phi(y_0y_1)=1$, $\phi(x_0x_2)=2$ and $\phi(y_0y_2)=3$ wlog. Now we can extend $\phi$ to a $4$-coloring $\varphi$ of $G$ by taking $\varphi(uv)=1$, $\varphi(vy_0)=2$, $\varphi(ux_0)=3$ and $\varphi(vx_0)=\varphi(uy_0)=4$. If $m\leq 2$, assume that $\phi(x_0x_1)=\phi(y_0y_1)=1$ and $\phi(x_0x_2)=\phi(y_0y_2)=2$ wlog. Now we can also extend $\phi$ to a $4$-coloring $\varphi$ of $G$ by taking $\varphi(uv)=1$, $\varphi(vy_0)=\varphi(ux_0)=3$ and $\varphi(vx_0)=\varphi(uy_0)=4$. On the other hand, if $x_0y_0\in E(G)$, let $N(x_0)=\{u,v,y_0,x_1\}$ and $N(y)=\{u,v,x_0,y_1\}$. Then $x_1\not=y_1$, otherwise by the 2-connectivity of $G$ we have $G\simeq G[\{u,v,x_0,y_0,x_1\}]$, which can be 4-colorable. Consider the graph $G'=G\setminus\{u,v\}-x_0y_0$, which admits a 4-coloring $\phi$ by the minimality of $G$. If $\phi(x_0x_1)=\phi(y_0y_1)=1$, then let $\col(uv)=1$, $\col(x_0y_0)=2$, $\col(ux_0)=\col(vy_0)=3$ and $\col(vx_0)=\col(uy_0)=4$. If $\phi(x_0x_1)=1$ and $\phi(y_0y_1)=2$, then let $\col(vy_0)=1$, $\col(ux_0)=2$, $\col(uv)=\col(x_0y_0)=3$ and $\col(vx_0)=\col(uy_0)=4$. Second, assume that one of $x_0$ and $y_0$ has degree three. Assume that $d(x_0)=3$ wlog. Let $N(x_0)=\{u,v,w\}$. Consider the PO-graph $G'=G-ux_0$. By the minimality of $G$, $G'$ has a 4-coloring $\phi$. If $A:=S\setminus \{\phi(vx_0),\phi(wx_0),\phi(uv),\phi(uy_0)\}\neq \emptyset$ (recall that $S=\{1,2,3,4\}$), then let $\col(ux_0)\in A$. Otherwise, assume that $\phi(vx_0)=1$, $\phi(wx_0)=2$, $\phi(uv)=3$ and $\phi(uy_0)=4$ wlog. Since $d(v)=3$, $\phi(uy_0)=4$ and $vy_0\in E(G')$, $v$ is not incident with the color $4$ under $\phi$. Thus we can extend $\phi$ to a 4-coloring of $G$ by recoloring $vx_0$ with $4$ and then coloring $ux_0$ with 1.

If $G\supseteq G_{12}$, we shall assume that $\Delta(G)=4$ because of Lemma \ref{ed}. Assume first that $d(x)=d(y)=4$. If $xy\not\in E(G)$, then denote $N(x)=\{v,w,x_1,x_2\}$ and $N(y)=\{v,w,y_1,y_2\}$. Consider the graph $G'=G\setminus \{v,w\}+xy+ux+uy$. Since the configuration $G_{12}$ is a part of the pseudo-outerplanar diagram of $G$ by Lemma \ref{2con}, we can properly add three edges $xy$, $ux$ and $uy$ to $G\setminus \{v,w\}$ such that $G'$ is still a PO-graph. Thus
$G'$ admits a 4-coloring $\phi$ by the minimality of $G$. One can see that $\{\phi(xx_1),\phi(xx_2)\}\neq \{\phi(yy_1),\phi(yy_2)\}$ (otherwise we cannot properly color the triangle $uxy$ under $\phi$) and $\{\phi(xx_1),\phi(xx_2)\}\cap \{\phi(yy_1),\phi(yy_2)\}\neq \emptyset$ (otherwise we cannot color the edge $xy$ under $\phi$). Assume that $\phi(xx_1)=1$, $\phi(xx_2)=\phi(yy_1)=2$ and $\phi(yy_2)=3$ wlog. Then we can construct a 4-coloring $\col$ of $G$ by taking $\col(uv)=\col(wy)=1$, $\col(vw)=2$, $\col(uw)=\col(vx)=3$ and $\col(wx)=\col(vy)=4$.
If $xy\in E(G)$, then denote $N(x)=\{v,w,y,x_1\}$ and $N(y)=\{v,w,x,y_1\}$. We shall also assume that $x_1\not=y_1$ because otherwise $G\simeq G[\{u,v,w,x,y,x_1\}]$ by the 2-connectivity of $G$, which admits a 4-coloring. Now we remove $u,v$ and $w$ from the diagram of $G$. Denote by $G''$ the resulting diagram. Then $G''$ is a PO-graph so that both $x$ and $y$ has degree two in $G''$. Since the diagram of $G$ minimizes the number of crossings, $xx_1$ does not cross $yy_1$ in $G$ (and thus in $G''$). Denote by $G'$ the graph obtained from $G''$ by contracting the edge $xy$. From the above arguments, one can see that $G'$ is still a PO-graph with $E(G)\setminus E(G')=\{uv,uw,vw,vx,wx,vy,wy,xy\}$. Furthermore, by the minimality of $G$, $G'$ admits a 4-coloring $\phi$ with $\phi(xx_1)\neq \phi(yy_1)$. Suppose that $\phi(xx_1)=1$ and $\phi(yy_1)=2$. Then we can construct a 4-coloring $\col$ of $G$ by taking  $\col(uw)=\col(vy)=1$, $\col(uv)=\col(wx)=2$, $\col(vw)=\col(xy)=3$ and $\col(vx)=\col(wy)=4$. Second, assume that one of $x$ and $y$, say $x$ wlog., has degree at most three. If $d(x)\leq 2$, then it is easy to see that $G\simeq G[\{u,v,w,x,y\}]$ by the 2-connectivity of $G$, which admits a 4-coloring. If $d(x)=3$, then denote $N(x)=\{v,w,x_1\}$. Consider the PO-graph $G'=G-uv$, which admits a 4-coloring $\phi$ by the minimality of $G$. If $A:=S\setminus \{\phi(uw),\phi(vw),\phi(vy),\phi(vx)\}\neq \emptyset$ (recall that $S=\{1,2,3,4\}$), then let $\col(uv)\in A$. Otherwise, assume that $\phi(uw)=1$, $\phi(vw)=2$, $\phi(vy)=3$ and $\phi(vx)=4$ wlog. It follows that $\phi(wx)=3$ and $\phi(wy)=4$. If $\phi(xx_1)=1$, then we can construct a 4-coloring of $G$ by recoloring $vx$ and $uw$ with 2, recoloring $vw$ with 1 and coloring $uv$ with 4. If $\phi(xx_1)=2$, then we can again construct a 4-coloring of $G$ by recoloring $vx$ with 1 and coloring $uv$ with 4.

If $G\supseteq G_{13}$, then we shall assume that $d(x)=\Delta(G)=4$ by Lemma \ref{ed}. Denote the fourth neighbor of $x$ by $x_1$ and meanwhile assume that $d(y)=4$ and $N(y)=\{v,w,y_1,y_2\}$ wlog. Then the PO-graph $G'=G\setminus \{u,v,w\}$ admits a 4-coloring $\phi$. Wlog. assume that $\phi(xx_1)=1$. Construct a 4-coloring $\col$ of $G$ as follows.
If $1\in C^1_{\phi}(y)$ (suppose $\phi(yy_1)=1$ and $\phi(yy_2)=2$ wlog.), then let $\col(vw)=1$, $\col(uv)=\col(wx)=2$, $\col(vx)=\col(wy)=3$ and $\col(ux)=\col(vy)=4$. If $1\not\in C^1_{\phi}(y)$ (suppose $\phi(yy_1)=2$ and $\phi(yy_2)=3$ wlog.), then let $\col(vy)=1$, $\col(ux)=\col(vw)=2$, $\col(uv)=\col(wx)=3$ and $\col(vx)=\col(wy)=4$.

If $G\supseteq G_{16}$, then we shall assume that $d(x)=d(y)=\Delta(G)=4$ by Lemma \ref{ed}. Denote the fourth neighbor of $x$ and $y$ by $x_1$ and $y_1$ respectively. Then the PO-graph $G'=G\setminus \{u,v,w,z\}$ admits a 4-coloring $\phi$. Construct a 4-coloring $\col$ of $G$ as follows.
If $\phi(xx_1)=\phi(yy_1)=1$, then let $\col(vw)=1$, $\col(ux)=\col(vz)=\col(wy)=2$, $\col(wx)=\col(vy)=3$ and $\col(uw)=\col(vx)=\col(yz)=4$. If $1=\phi(xx_1)\neq \phi(yy_1)=2$, then let $\col(vz)=\col(wy)=1$, $\col(ux)=\col(wy)=\col(vz)=2$, $\col(wx)=\col(vy)=3$ and $\col(uw)=\col(vx)=4$.

If $G\supseteq G_{17}$, then we shall assume that $d(x)=d(y)=\Delta(G)=5$ by Lemma \ref{ed}. Then the PO-graph $G'=G\setminus \{u,v,w,z,a\}$ admits a 5-coloring $\phi$. Construct a 5-coloring $\col$ of $G$ as follows.
If $C^1_{\phi}(x)=C^1_{\phi}(y)=\{1,2\}$, then let $\col(uw)=\col(av)=1$, $\col(wz)=\col(uv)=2$, $\col(xz)=\col(vw)=\col(ay)=3$, $\col(wx)=\col(vy)=4$ and $\col(vx)=\col(wy)=5$. If $|C^1_{\phi}(x)\cap C^1_{\phi}(y)|=1$ (suppose $C^1_{\phi}(x)=\{1,2\}$ and $C^1_{\phi}(y)=\{1,3\}$ wlog.), then let $\col(vw)=1$, $\col(wy)=\col(av)=2$, $\col(wz)=\col(vx)=3$, $\col(wx)=\col(uv)=\col(ay)=4$ and $\col(xz)=\col(uw)=\col(vy)=5$. If $|C^1_{\phi}(x)\cap C^1_{\phi}(y)|=0$ (suppose $C^1_{\phi}(x)=\{1,2\}$ and $C^1_{\phi}(y)=\{3,4\}$ wlog.), then let $\col(vw)=\col(ay)=1$, $\col(wz)=\col(vy)=2$, $\col(vx)=\col(uw)=3$, $\col(wx)=\col(av)=4$ and $\col(xz)=\col(uv)=\col(wy)=5$.
\end{proof}

\begin{figure}\label{sppog}
\begin{center}
  % Requires \usepackage{graphicx}
  \includegraphics[width=14cm,height=6cm]{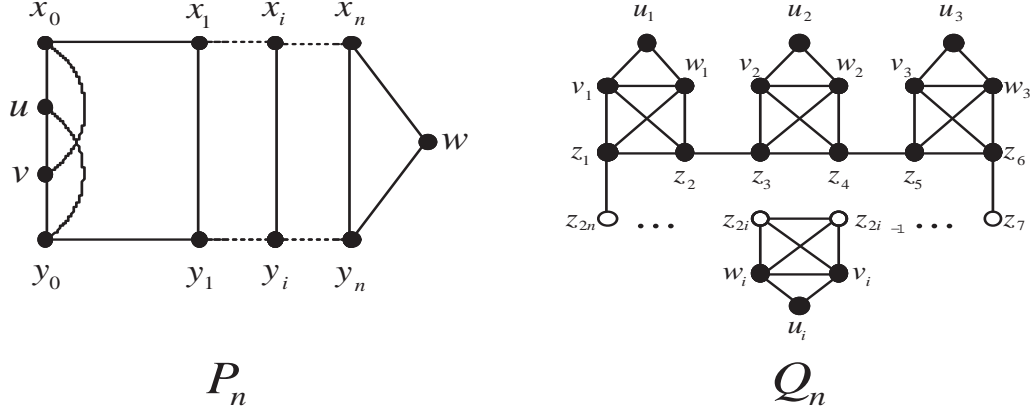}\\
  \caption{Special pseudo-outerplanar graphs}
\end{center}
\end{figure}

\begin{theorem}
For each integer $n\geq 1$, there exists a $2$-connected pseudo-outerplanar $G$ with order $2n+5$ and $\Delta(G)=3$ so that $\chi'(G)=\Delta(G)+1$.
\end{theorem}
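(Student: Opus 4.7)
The plan is to exhibit, for each integer $n \geq 1$, an explicit 2-connected pseudo-outerplanar graph $G_n$ of order $2n+5$ having a unique vertex of degree $2$ and all remaining vertices of degree $3$, and then to derive $\chi'(G_n) = 4$ from a short edge-counting argument. Concretely, I would take the cycle $u_0 v_1 v_2 \cdots v_{2n+4} u_0$ of length $2n+5$, add the $n$ nested chords $v_i v_{2n+5-i}$ for $i = 1,\ldots,n$, and then add the two crossing chords $v_{n+1}v_{n+3}$ and $v_{n+2}v_{n+4}$.

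The first step is to verify the structural requirements. By construction $|V(G_n)| = 2n+5$, the vertex $u_0$ has degree $2$, and every $v_i$ has degree $3$: for $i \le n$ or $i \ge n+5$ the third edge comes from a nested chord, and for $n+1 \le i \le n+4$ it comes from the crossing pair. Hence $\Delta(G_n) = 3$, and since $G_n$ contains a Hamilton cycle it is $2$-connected. To see that $G_n$ is pseudo-outerplanar, place the vertices on a circle in the order $u_0, v_1, \ldots, v_{2n+4}$ and use the standard fact that two chords $(a,b)$ and $(c,d)$ with $a<b$, $c<d$ cross inside the disk iff $a<c<b<d$ or $c<a<d<b$. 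The nested chords $(i,2n+5-i)$ are pairwise strictly nested (since $i<j\le n$ implies $i<j<2n+5-j<2n+5-i$), hence mutually non-crossing; the two chords $(n+1,n+3)$ and $(n+2,n+4)$ alternate and cross only each other; and each nested chord strictly contains the interval $[n+1,n+4]$ and so crosses neither chord of the crossing pair. Thus no chord is crossed by more than one other chord.

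The core of the proof is the lower bound $\chi'(G_n)\ge 4$. A direct count gives
\[
|E(G_n)| = (2n+5) + n + 2 = 3n+7.
\]
If $G_n$ admitted a proper $3$-edge-coloring, its edges would decompose into three matchings, each of size at most $\lfloor (2n+5)/2\rfloor = n+2$, yielding $|E(G_n)|\le 3(n+2) = 3n+6$, a contradiction. Vizing's theorem supplies the matching upper bound $\chi'(G_n)\le \Delta(G_n)+1 = 4$, so $\chi'(G_n) = 4 = \Delta(G_n)+1$ as required.

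The only substantive obstacle is arranging the chords so that every vertex besides $u_0$ has degree exactly $3$ while simultaneously no chord is crossed by more than one other; the nested-plus-one-crossing-pair layout above handles this uniformly for all $n\ge 1$. Once that structure is in place the overfull-style edge count is essentially forced, because a single degree-$2$ vertex in an otherwise cubic graph on odd order always produces one edge too many to fit into three matchings of size $\lfloor |V|/2\rfloor$.
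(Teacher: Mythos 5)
Your proposal is correct, but it reaches the conclusion by a genuinely different route from the paper. The paper builds a ladder-like graph $P_n$ (a cycle $x_0\cdots x_n w y_n\cdots y_0 v u x_0$ with rungs $x_iy_i$ and the crossing pair $x_0v,\,y_0u$) and proves $\chi'(P_n)\ge 4$ by a color-propagation argument: any proper $3$-edge-coloring would force $\phi(x_ix_{i+1})=\phi(y_iy_{i+1})$ step by step along the ladder, culminating in the contradiction $\phi(x_nw)=\phi(y_nw)$. You instead construct an overfull graph --- odd order $2n+5$, one vertex of degree $2$, all others of degree $3$ --- and observe that $|E|=3n+7$ exceeds $3\lfloor(2n+5)/2\rfloor=3n+6$, so three matchings cannot cover the edges. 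Your counting argument is shorter, more robust, and makes transparent \emph{why} the example works; in fact the paper's own graph $P_n$ has exactly the same degree sequence ($w$ is its unique degree-$2$ vertex) and edge count $3n+7$, so your overfull argument would apply verbatim to $P_n$ as well, whereas the propagation argument is tied to the specific ladder structure. Your verification of pseudo-outerplanarity (nested chords pairwise non-crossing and each strictly containing the interval $[n+1,n+4]$, so that the only crossing is between $v_{n+1}v_{n+3}$ and $v_{n+2}v_{n+4}$) and of $2$-connectivity via the Hamiltonian cycle is sound, so no gaps remain.
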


\begin{proof}
Let $C=x_0\cdots x_n w y_n \cdots y_0 v u x_0$ be a cycle. We add edges $x_i y_i$ for all $1\leq i\leq n$ and add another two edges $x_0 v$ and $y_0 u$ to $C$. Denote the resulting graph by $P_n$ (See Figure 4). One can easily check that $P_n$ is a 2-connected pseudo-outerplanar graph with $|P_n|=2n+5$ and $\Delta(P_n)=3$. If $P_n$ has a $3$-coloring $\phi$, then we shall have $\phi(x_0 v)=\phi(y_0 u)$ and $\phi(x_0 u)=\phi(y_0 v)$ (otherwise we cannot color $uv$ properly). Thereby we would deduce that $\phi(x_ix_{i+1})=\phi(y_iy_{i+1})$ for all $0\leq i\leq n-1$ and then $\phi(x_n w)=\phi(y_n w)$. This final contradiction implies that $\chi'(P_n)=\Delta(P_n)+1=4$.
\end{proof}

\begin{theorem} \label{linear arborocity}
Let $G$ be a pseudo-outerplanar graph. If $\Delta(G)=3$ or $\Delta(G)\geq 5$, then $la(G)=\lceil\frac{\Delta(G)}{2}\rceil$.
\end{theorem}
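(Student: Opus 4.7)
The plan is to follow the template of the proof of Theorem \ref{edge chromatic number}. The lower bound $la(G) \geq \lceil \Delta(G)/2 \rceil$ is immediate from the definition of linear arboricity, so only the upper bound requires work. I would argue by contradiction: let $G$ be a minimum (in size) pseudo-outerplanar diagram with $\Delta(G) = 3$ or $\Delta(G) \geq 5$ that admits no $\lceil \Delta(G)/2 \rceil$-tree-coloring. A routine block-decomposition argument (each linear forest uses at most two edges per vertex, so colorings of smaller blocks glue at cut vertices after permuting colors) lets me assume $G$ is $2$-connected, hence $\delta(G) \geq 2$. Then $G$ is la-critical and Lemma \ref{la} gives
\[
d(u) + d(v) \geq 2\bigl\lceil \Delta(G)/2 \bigr\rceil + 2
\]
for every edge $uv$, while Theorem \ref{2con} supplies one of the configurations $G_1, \ldots, G_{17}$ inside $G$.

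The next step is to use the degree-sum inequality to eliminate as many of $G_1, \ldots, G_{17}$ as possible, case-split on $\Delta(G)$. When $\Delta(G) = 3$ the inequality becomes $d(u) + d(v) \geq 6$, which, together with $\Delta(G) = 3$, forces $G$ to be $3$-regular; in particular every configuration drawn with a solid vertex of degree different from $3$ is automatically excluded, which (inspecting the figure in Theorem \ref{2con}) leaves essentially no possibility and gives an immediate contradiction or an extremely small $G$ that can be $2$-tree-colored by hand. When $\Delta(G) \geq 5$ the inequality is at least $d(u) + d(v) \geq \Delta(G) + 2$, and exactly as in the proof of Theorem \ref{edge chromatic number} only a short list of configurations survives, namely a subfamily of $\{G_3, G_4, G_5, G_6, G_{12}, G_{13}, G_{16}, G_{17}\}$ in which the degree-$2$ endpoints of Theorem \ref{2con} have neighbors of degree $\Delta(G)$ (or within one).

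For each surviving configuration $G_i$ I would mimic the extension strategy of Theorem \ref{edge chromatic number}: delete a small set $X$ of low-degree vertices from $G$ (when necessary adding an auxiliary edge, using part~(b) of Theorem \ref{2con} to keep the reduced graph pseudo-outerplanar), invoke minimality to obtain a $\lceil \Delta(G)/2\rceil$-tree-coloring $\phi$ of $G-X$, and then extend $\phi$ over the edges incident with $X$. The extension is aided by the weaker nature of tree-coloring: at each vertex $v$ every color may appear on up to two edges rather than only one, and the only forbidden pattern is a monochromatic cycle. Thus at a vertex $v$ of degree $d_G(v)$ one has $\lceil \Delta(G)/2 \rceil - \lceil d_G(v)/2 \rceil$ entirely unused colors plus all colors in $C^1_\phi(v)$ available for a new edge, and in the surviving configurations this slack is enough to assign colors to $E(G) \setminus E(G-X)$.

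The main obstacle will be the extension step. Unlike proper edge-coloring, one must monitor a global constraint (no monochromatic cycle and no vertex of monochromatic degree $3$), so the choice of colors on the new edges interacts with the topology of each color class inside $G-X$. I expect the delicate cases to be $G_{16}$ and $G_{17}$, where several chords meet at a common high-degree vertex and $\phi$ may already place two edges of the same color there; there the strategy will be either a short Kempe-chain swap on a bichromatic path inside $G-X$ to free a color, or a careful partition of $\{1, \ldots, \lceil \Delta(G)/2 \rceil\}$ according to $C^1_\phi$ and $C^2_\phi$ at the high-degree endpoints, in the same spirit as the $G_{17}$ case of the proof of Theorem \ref{edge chromatic number}.
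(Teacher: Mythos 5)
Your framework (minimal counterexample, la-criticality, Lemma \ref{la} filtering the configurations of Theorem \ref{2con}, then a local extension) matches the paper's for the even case, but there are two concrete problems. First, your treatment of $\Delta(G)=3$ is wrong as stated: the inequality $d(u)+d(v)\geq 6$ does force $3$-regularity, but this does not eliminate every configuration --- $G_6$ (two crossed chords forming $K_4$ minus an edge) has both of its solid vertices of degree exactly $3$, so it survives, and you would still have to extend a $2$-tree-coloring of $G\setminus\{u,v\}$ over five new edges while avoiding monochromatic cycles that close up through $G\setminus\{u,v\}$; ``essentially no possibility'' is not an argument. The paper disposes of all odd $\Delta$ (both $\Delta=3$ and odd $\Delta\geq5$) in one line by invoking the Linear Arboricity Conjecture for planar graphs, which gives $la(G)\leq\lceil(\Delta+1)/2\rceil=\lceil\Delta/2\rceil$ whenever $\Delta$ is odd. (For odd $\Delta\geq 5$ your filtering does in fact kill every configuration, since $2\lceil\Delta/2\rceil+2=\Delta+3$ while each configuration contains an edge of degree sum at most $\Delta+2$; so that branch of your plan is sound, but the $\Delta=3$ branch is not.)

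Second, and more seriously, for even $\Delta\geq 6$ the degree-sum filter leaves exactly one configuration, $G_3$, and the entire content of the proof is the extension step you leave unexecuted. Your worry about $G_{16}$ and $G_{17}$ is misplaced: by Lemma \ref{la} those can only occur in critical graphs with $\Delta=4$ and $\Delta=5$, neither of which lies in the even case $\Delta\geq 6$. The genuine difficulty sits in $G_3$ with $xy\in E(G)$: after $k$-tree-coloring $G\setminus\{v\}$ and assigning $vx$ and $vy$ the unique colors in $C_{\phi}^{1}(x)$ and $C_{\phi}^{1}(y)$, one can create a monochromatic cycle or a vertex of monochromatic degree $3$ (for instance when $C_{\phi}^{1}(x)=C_{\phi}^{1}(y)$ and that color already appears on $xy$, or on both $ux$ and $uy$), and the paper resolves exactly these situations by explicit recolorings (exchanging the colors of $ux$ and $vx$, or recoloring $xy$, $vx$ and $uy$). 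Your generic slack count of unused colors plus $C^1_\phi$ does not address this interaction, so the proof does not close without exhibiting those swaps or an equivalent Kempe-type argument.
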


\begin{proof}
Since conjecture \ref{lac} has already been proved for planar graphs and every PO-graph is planar (cf. Section 1), this theorem holds trivially when $\Delta(G)$ is odd. Thus in the following we assume that $\Delta(G)\geq 6$ and $\Delta(G)$ is even. For brevity we write $k=\frac{\Delta(G)}{2}$. Suppose for a contradiction that there exists a minimal (in terms of the size) pseudo-outerplanar graph $G$ that has no $k$-coloring. One can easily observe that $G$ is $2$-connected and la-critical. By Theorem \ref{2con} and Lemma \ref{la}, $G$ contains the configuration $G_3$.

If $xy\not\in E(G)$, then by (b) of Lemma \ref{2con}, $G'=G\setminus \{v\}+xy$ is still a PO-graph. Thus by the minimality of $G$, $G'$ admits a $k$-coloring $\phi$. Now we can construct a $k$-coloring $\col$ of $G$ by taking $\col(vx)=\col(vy)=\phi(xy)$ and $\col(e)=\phi(e)$ for every $e\in E(G)\cap E(G')$.

If $xy\in E(G)$, then consider the PO-graph $G'=G\setminus \{v\}$, which has a $k$-coloring $\phi$ by the minimality of $G$. It is easy to see that $|C_{\phi}^1(x)|=|C_{\phi}^1(y)|=1$, since $d(x)=d(y)=\Delta(G)=2k$ by Lemma \ref{la}. We now construct a coloring $\col$ of $G$ by taking $\col(vx)\in C_{\phi}^1(x)$, $\col(vy)\in C_{\phi}^1(y)$ and $\col(e)=\phi(e)$ for every $e\in E(G)\cap E(G')$. If $C_{\phi}^1(x)\neq C_{\phi}^1(y)$, then it is easy to see that $\col$ is a $k$-coloring. If $C_{\phi}^1(x)=C_{\phi}^1(y)$, then $\col(vx)=\col(vy)$ and $\col$ is also a $k$-coloring unless $\col(xy)=\col(vx)$ or $\col(ux)=\col(uy)=\col(vx)$. If $\col(xy)=\col(vx)$, then $\col(vx)\not\in \{\col(ux),\col(uy)\}$ and thus we can exchange the colors on $ux$ and $vx$. One can easy to check that the resulting coloring of $G$ is a $k$-coloring. If $\col(ux)=\col(uy)=\col(vx)$, then we recolor $xy$ with $\col(vx)$ and recolor both $vx$ and $uy$ with $\col(xy)$. The resulting coloring of $G$ is also a $k$-coloring.
\end{proof}

\begin{theorem}
For each integer $m\geq 1$, there exists a $2$-connected pseudo-outerplanar $G$ with order $10m+5$ and $\Delta(G)=4$ so that $la(G)=\lceil\frac{\Delta(G)}{2}\rceil+1$.
\end{theorem}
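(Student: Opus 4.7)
The plan is to exhibit, for each $m\ge1$, an explicit $2$-connected pseudo-outerplanar graph $Q_m$ on $10m+5$ vertices with $\Delta(Q_m)=4$, and then show that $la(Q_m)=3=\lceil\Delta(Q_m)/2\rceil+1$. The upper bound $la(Q_m)\le 3$ will be free: every pseudo-outerplanar graph is planar and the Linear Arboricity Conjecture is known for planar graphs \cite{jlWu,WW}, so $la(Q_m)\le\lceil(\Delta(Q_m)+1)/2\rceil=3$. The entire content of the theorem therefore lies in the matching lower bound $la(Q_m)\ge3$.

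The graph $Q_m$ should be constructed in the same spirit as the graph $P_n$ from the preceding $\chi'$-construction: a linear chain of $m$ identical $10$-vertex ``gadgets'', closed at one end by a short $5$-vertex ``cap''. Each gadget will be a $2$-connected pseudo-outerplanar piece whose internal vertices have degree exactly $4$, adjacent gadgets being glued along a pair of ``interface'' edges so that the global diagram remains pseudo-outerplanar with $\Delta=4$, $2$-connected, and of order exactly $10m+5$.

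To prove $la(Q_m)\ge3$ I would argue by contradiction: suppose a $2$-tree-coloring $\phi:E(Q_m)\to\{1,2\}$ exists, so each color class is a linear forest of maximum degree at most $2$. At every degree-$4$ vertex $v$, the four incident edges split as $2+2$ between the colors, which amounts to partitioning those edges into two unordered pairs. The central local lemma, to be verified by a finite case analysis on one gadget, should say that the only such partitions at the gadget's degree-$4$ vertices that avoid creating a triangle or a vertex of degree $3$ inside either color class are those which force the pair of interface edges leaving the gadget to carry the same color and, moreover, to carry the same color as the pair of interface edges entering the gadget. Iterating this forcing across all $m$ gadgets propagates a single color equation from one end of $Q_m$ to the other, and the $5$-vertex cap will be designed precisely to enforce the opposite equation; this yields the contradiction, exactly paralleling the inductive chain $\phi(x_ix_{i+1})=\phi(y_iy_{i+1})$ used for $P_n$.

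The principal obstacle will be the local forcing lemma inside a single gadget. I would have to enumerate all $2+2$ partitions of the four edges at each interior degree-$4$ vertex, discard those producing a triangle or a degree-$3$ vertex in a color class, and verify that every surviving combination forces the two interface edges of the gadget to share a common color; only then does the propagation along the chain and the final clash at the cap become essentially automatic. Pinning down the gadget so that this case analysis closes cleanly while keeping $Q_m$ pseudo-outerplanar, $2$-connected, of order $10m+5$, and of maximum degree exactly $4$ is the genuinely delicate part of the argument.
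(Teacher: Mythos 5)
Your proposal has a genuine gap: the theorem is an existence statement, and you never exhibit the graph. Everything is deferred to an unspecified $10$-vertex gadget and a $5$-vertex cap whose key properties (the ``local forcing lemma'' and the clash at the cap) you acknowledge are ``the genuinely delicate part'' still to be pinned down. Without a concrete gadget there is nothing to case-analyze, so neither the lower bound $la\ge 3$ nor even the basic claims (order $10m+5$, $2$-connectivity, pseudo-outerplanarity, $\Delta=4$) is established. A secondary issue: a color class of a $2$-tree-coloring must be a linear forest, i.e.\ acyclic with maximum degree at most $2$; ruling out only triangles and degree-$3$ vertices inside a color class is not the full set of constraints, and any finite case analysis would have to exclude longer monochromatic cycles as well (locally this may reduce to triangles, but that has to be argued, not assumed).

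For comparison, the paper's construction avoids your ``chain plus cap'' architecture entirely. It takes a cycle $C=z_1\cdots z_{2n}z_1$ together with $n$ disjoint triangles $u_iv_iw_iu_i$, and joins $v_i$ and $w_i$ to both $z_{2i-1}$ and $z_{2i}$; each gadget thus has $5$ vertices, and one takes $n=2m+1$ gadgets to get order $5(2m+1)=10m+5$. The local analysis of one gadget forces $\phi(z_{2i-2}z_{2i-1})\neq\phi(z_{2i}z_{2i+1})$ in any $2$-tree-coloring, so the $2m+1$ connector edges would have to alternate in color around a cyclic sequence of odd length, which is impossible. The global parity of an odd cycle replaces your terminal cap, so there is only one gadget to analyze and no separate boundary object to design. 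If you want to salvage your plan, the most economical fix is to adopt exactly this cyclic viewpoint; otherwise you must actually write down your gadget and cap and carry out the finite verification you describe.
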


\begin{proof}
Let $C=z_1\cdots z_{2n} z_1$ be a cycle and $T_i=u_i v_i w_i u_i$$(1\leq i\leq n)$ be triangles. Suppose that they are pairwise disjoint. Now for each $1\leq i\leq n$, add fours edges $v_i z_{2i-1}$, $v_i z_{2i}$, $w_i z_{2i-1}$ and $w_i z_{2i}$. Denote the resulting graphs by $Q_n$ (See Figure 4). One can easily check that $Q_n$ is a 2-connected pseudo-outerplanar graph with $\Delta(Q_n)=4$. Consider the graph $Q_{2m+1}$($m\geq 1$). It is trivial that $|Q_{2m+1}|=10m+5$ and $la(Q_{2m+1})\leq 3$ by Lemma \ref{la}. If $Q_{2m+1}$ has a 2-coloring $\phi$, then we shall have $\phi(z_{2i-2}z_{2i-1})\neq \phi(z_{2i}z_{2i+1})$ for all $1\leq i\leq 2m+1$, where $z_0=z_{4m+2}$ and $z_{4m+3}=z_{1}$ (otherwise we cannot properly color the set of edges $\{u_iv_i,v_iw_i,w_iu_i, v_i z_{2i-1}, v_i z_{2i}, w_i z_{2i-1}, w_i z_{2i}\}$ for some $i$). However, the size of the set $\{z_2z_3,z_4z_5,\cdots,z_{4m+2}z_1\}$ is $2m+1$, which is odd, but there are only two colors that can be used in $\phi$. This final contradiction implies that $la(Q_{2m+1})=\lceil\frac{\Delta(Q_{2m+1})}{2}\rceil+1=3$.
\end{proof}

\section*{Acknowledgement}

The authors thank the referees for many helpful comments and suggestions, which have greatly improved the presentation of the results in this paper, and would also like to acknowledge the editors for pointers to relevant literature and phraseological comments.

\end{document}